\newcommand{\N}{\mathbb{N}}
\newcommand{\Z}{\mathbb{Z}}
\newcommand{\Q}{\mathbb{Q}}
\newcommand{\R}{\mathbb{R}}
\newcommand{\C}{\mathbb{C}}
\DeclareMathOperator{\id}{id}
\newcommand{\Loops}{\Omega}
\newcommand{\Realization}[1]{\abs{#1}}
\newcommand{\cat}[1]{\mathbf{#1}}
\DeclareMathOperator{\co}{co}
\newcommand{\w}{\mathrm{w}}
\newcommand{\wSdot}{\mathrm{w}\mathrm{S}_{\bullet}}
\newcommand{\wwSSdot}{\mathrm{w}\mathrm{w}\mathrm{S}_{\bullet}\mathrm{S}_{\bullet}}
\newcommand{\h}{\mathit{h}}
\newcommand{\hSdot}{\mathit{h}\mathrm{S}_{\bullet}}
\newcommand{\Sdot}{\mathrm{S}_{\bullet}}
\newcommand{\Sn}{\mathrm{S}_n}
\newcommand{\wSn}{\mathrm{w}\mathrm{S}_n}
\newcommand{\K}{K}
\newcommand{\KK}{\mathbb{K}}
\newcommand{\BK}{{\mathcal B}K}
\newcommand{\BKK}{{\mathcal B}\mathbb{K}}
\newcommand{\BKrel}{{\mathcal B}K^{\mathrm{rel}}}
\newcommand{\BKKrel}{{\mathcal B}\mathbb{K} ^{\mathrm{rel}}}
\newcommand{\BH}{{\mathcal B}H}
\newcommand{\PK}{{\mathcal P}K}
\newcommand{\PKrel}{{\mathcal P}K^{\mathrm{rel}}}
\newcommand{\PH}{{\mathcal P}H}
\newcommand{\Wh}{\mathbb{W}\mathrm{h}}
\newcommand{\BWh}{{\mathcal B}\mathbb{W}\mathrm{h}}
\newcommand{\BWhrel}{{\mathcal B}\mathbb{W}\mathrm{h}^{\mathrm{rel}}}
\newcommand{\B}{\mathcal B}
\newcommand{\BhSdot}{\mathcal{B}\mathit{h}\mathrm{S}_{\bullet}}
\DeclareMathOperator{\Ar}{Ar}
\DeclareMathOperator{\F}{F}
\newcommand{\defnword}[1]{\emph{#1}}
\newcommand{\nonneg}[1]{#1^{\geq 0}}
\newcommand{\norm}[1]{\left\|#1\right\|}
\newcommand{\abs}[1]{\left|#1\right|}
\DeclareMathOperator{\SIC}{\cal SIC}
\DeclareMathOperator{\FL}{FL}
\DeclareMathOperator{\FP}{FP}
\newcommand{\BFP}{{\mathcal B}\mathrm{FP}}
\DeclareMathOperator{\Ch}{\mathbf{Ch}}
\DeclareMathOperator{\BCh}{{\mathcal B}\mathbf{Ch}}
\DeclareMathOperator{\Fin}{fin}
\DeclareMathOperator{\hFin}{\textit{h}fin}
\DeclareMathOperator{\BhFin}{{\mathcal B}\textit{h}fin}
\DeclareMathOperator{\Bh}{{\mathcal B}\textit{h}}
\DeclareMathOperator{\Hom}{Hom}
\DeclareMathOperator{\ProjModules}{\mathbf{P}}
\DeclareMathOperator{\FreeModules}{\mathbf{F}}
\DeclareMathOperator{\ProjModulesWeighted}{\mathbf{P}^{\mathrm{w}}}
\DeclareMathOperator{\FreeModulesWeighted}{\mathbf{F}^{\mathrm{w}}}
\DeclareMathOperator{\BoundedProjModulesWeighted}{{\mathcal B}\mathbf{P}^{\mathrm{w}}}
\DeclareMathOperator{\BoundedFreeModulesWeighted}{{\mathcal B}\mathbf{F}^{\mathrm{w}}}
\DeclareMathOperator{\ProjModulesFingen}{\mathbf{P}_\mathrm{f}}
\DeclareMathOperator{\FreeModulesFingen}{\mathbf{F}_\mathrm{f}}
\DeclareMathOperator{\Cyl}{Cyl}
\DeclareMathOperator{\Cone}{Cone}
\DeclareMathOperator{\cofiber}{cofiber}
\DeclareMathOperator{\Asm}{Asm}
\DeclareMathOperator{\Monomial}{Monomial}
\newcommand{\semidirect}{\rtimes}
\newcommand{\Smash}{\wedge} 
\newtheorem{theorem}{Theorem}
\newtheorem{conjecture}{Conjecture}
\newaliascnt{corollary}{theorem}
\crefname{corollary}{Corollary}{Corollaries}
\newaliascnt{lemma}{theorem}
\newtheorem{lemma}[lemma]{Lemma}
\crefname{lemma}{Lemma}{Lemmas}
\newaliascnt{proposition}{theorem}
\newtheorem{proposition}[proposition]{Proposition}
\crefname{proposition}{Proposition}{Propositions}
\newaliascnt{claim}{theorem}
\newtheorem{claim}[claim]{Claim}
\crefname{claim}{Claim}{Claims}
\newtheorem*{approximationtheorem*}{Approximation Theorem}
\newtheorem*{splittingtheorem*}{Theorem \ref{theorem:splitting}}
\newtheorem*{theoremnontrivial*}{Theorem \ref{theorem:nontrivial}}
\newtheorem*{corollarysolvmanifold*}{Corollary \ref{corollary:solvmanifold}}
\theoremstyle{definition}
\newaliascnt{remark}{theorem}
\crefname{Remark}{Remark}{Remarks}
\newaliascnt{observation}{theorem}
\newtheorem{observation}[observation]{Observation}
\crefname{Observation}{Observation}{Observations}
\newaliascnt{example}{theorem}
\crefname{Example}{Example}{Examples}
\newaliascnt{question}{theorem}
\crefname{Question}{Question}{Questions}
\newaliascnt{definition}{theorem}
\newtheorem{definition}[definition]{Definition}
\crefname{Definition}{Definition}{Definitions}
\newaliascnt{axiom}{theorem}
\crefname{Axiom}{Axiom}{Axioms}
\begin{document}
\title{{Bounded homotopy theory and the $K$-theory of weighted complexes}}
\date{}
\author{J.~Fowler, C.~Ogle\\
\textsc{Dept.~of Mathematics}\\
\textsc{The Ohio State University}
}
\maketitle

\begin{abstract}
  Given a bounding class $B$, we construct a bounded refinement
  $BK(-)$ of Quillen's $K$-theory functor from rings to spaces.
  $BK(-)$ is a functor from weighted rings to spaces, and is equipped
  with a comparison map $BK \to K$ induced by "forgetting control". In
  contrast to the situation with $B$-bounded cohomology, there is a
  functorial splitting $BK(-) \simeq K(-) \times BK^{rel}(-)$ where
  $BK^{rel}(-)$ is the homotopy fiber of the comparison map.
\end{abstract}

\tableofcontents

\section{Introduction}

Instead of considering all cocycles, one can restrict attention to
cocycles which are bounded with respect to a \textit{bounding class}
$\mathcal{B}$.  Forgetting that any condition was imposed on the
cocycles gives a natural map from $\mathcal B$-bounded cohomology to
ordinary group cohomology with coefficients---this yields a
\defnword{comparison map}
$$
\BH^\star(G;V) \to H^\star(G;V) 
$$
functorial in $G$ and $V$.  If this map is an isomorphism for all
\textit{suitable} coefficients modules $V$ (where suitable means
bornological modules over the rapid decay algebra $\mathcal
H_{\mathcal B,L}(G)$ as defined in \cite{2010arXiv1004.4677J}), we say
that $G$ is strongly $\mathcal B$-isocohomological
(abbr. $\B$-$\SIC$).  Properties of such comparison maps are related
to geometric properties of the group, e.g., surjectivity of the
comparison map is related to hyperbolicity when $\mathcal B = \{
\mbox{constant functions} \}$ \cite{MR1887670} and more general
isocohomologicality is related to combings \cite{2010arXiv1004.4677J}.

In light of the success of bounded methods in cohomology, the
precedent has been set to consider $\mathcal B$-bounded variants of
$K$-theory, and to introduce a $K$-theoretic comparison map
$\BK^\star(G) \to \K^\star(G)$.  We do so in \cref{section:k-theory}:
given a bounding class $\mathcal B$, we construct $\BK(-)$, a functor
from weighted rings\footnote{Although our theory applies generally to
  weighted rings, defined in \cref{section:weighted-rings}, our
  primary focus in this paper will be weighted rings of the form
  $R[G]$, where $R$ is a discretely normed ring and $G$ is a group
  with word length.} to spaces, and a comparison map
$$
\BK(-) \to \K(-)
$$
which is a natural transformation on the category of weighted rings.

Although similar in appearance to the ``forget control'' maps of
controlled $K$-theory (e.g., \cite{MR1311017}), the bounded $K$-theory
developed here is quite different, and in particular, is founded in what
might be called \textit{weighted algebraic topology.}  In contrast to the
comparison map in $\mathcal B$-bounded cohomology, we have
\begin{splittingtheorem*}
  There is a functorial splitting
  $$
  \BK(-) \simeq \K(-) \times \BKrel(-)
  $$
  where $\BKrel(-)$ is the homotopy fiber of the comparison map
  $\BK(-) \to \K(-)$.  The splitting extends to a splitting of spectra.
\end{splittingtheorem*}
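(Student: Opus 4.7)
The plan is to produce a natural section $s\colon \K(-) \to \BK(-)$ of the comparison map $c\colon \BK(-) \to \K(-)$. Once such a section is in hand, the splitting is a formal consequence of the fact that $K$-theory spaces are group-like $H$-spaces: the map
$$
\K(-) \times \BKrel(-) \longrightarrow \BK(-), \qquad (x,f) \longmapsto s(x) \cdot f
$$
is a weak equivalence, since applying $c$ recovers $x$ on the first factor and is null on the second, thereby splitting the long exact sequence of homotopy groups associated to the fibration $\BKrel(-) \to \BK(-) \to \K(-)$. The same argument applied levelwise will yield the splitting of spectra, provided $s$ is realized as a map of spectra.

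To construct $s$, I would endow every finitely generated free module over a weighted ring $(R,w)$ with its canonical max-weight $w^n(r_1,\ldots,r_n) := \max_i w(r_i)$ on $R^n$. Any $R$-linear map $R^m \to R^n$ is described by a fixed matrix of entries in $R$, and the max-weight of its image is bounded by a constant (depending only on the matrix) plus the max-weight of its argument, so the map is automatically bounded in the sense required by the weighted category. This defines a functor $\FreeModulesFingen(R) \to \FreeModulesWeighted(R,w)$ which strictly splits the forget-weight functor on the nose; extending to projectives by idempotent splitting (or bypassing projectives via the $+$-construction on $\mathrm{GL}$) and then applying the $K$-theory machinery from \cref{section:k-theory} produces $s$. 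The construction is manifestly natural in the weighted ring.

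For the spectrum-level statement, the functor constructed above is a map of Waldhausen categories preserving cofibrations and the appropriate weak equivalences, so running the $\sdot$-construction iteratively produces a map of $K$-theory spectra splitting $c$; the splitting principle described in the first paragraph then applies spectrum-wise.

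The main obstacle is verifying that $c \circ s$ is genuinely the identity on $\K(-)$ at the level of the $K$-theory construction, not merely up to natural isomorphism of functors. This should hold essentially by design, since forgetting the max-weight returns precisely the identity functor on $\FreeModulesFingen(R)$, but careful bookkeeping is required to match this with the specific $K$-theoretic model used in \cref{section:k-theory}. A secondary technical point is to confirm that the max-weight assignment respects the weak equivalences of $\BK$ (for instance, isomorphisms bounded together with their inverses) and that naturality in the weighted ring holds on the nose; both are routine once the definitions are unpacked, but together they constitute the bulk of the verification.
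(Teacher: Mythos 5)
Your overall strategy---produce a natural section of the comparison map by equipping finitely generated free (or projective) modules with a canonical weight for which all $R[G]$-linear maps between them are automatically bounded---is exactly the idea at the heart of the paper's proof, and it matches the content of \cref{maps-are-bounded} and \cref{equivalences-between-categories}: in the finitely generated case, $\BCh_{\Fin}(\BoundedProjModulesWeighted(R[G]))$ and $\Ch_{\Fin}(\ProjModules(R[G]))$ are isomorphic Waldhausen categories, with coinciding subcategories of weak equivalences. The formal splitting argument via the infinite-loop-space structure is also fine.

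There is, however, a genuine gap. Both $\BK(-)$ and $\K(-)$ are defined as the $\Sdot$-construction $K$-theory of the categories $\BCh_{\hFin}$ and $\Ch_{\hFin}$ of \emph{homotopically finite} chain complexes, whose constituent modules need not be finitely generated. Your max-weight assignment makes no sense on such objects: there is no canonical weight function on an infinitely generated projective, and even after choosing one, not every module map would be bounded, so \cref{maps-are-bounded} fails outside the finitely generated range. Thus your proposed section is only defined on the full subcategory of finite complexes, not on the categories that actually define $\K$ and $\BK$. The paper bridges this gap by two applications of Waldhausen's Approximation Theorem, showing that $\BCh_{\Fin}\hookrightarrow\BCh_{\BhFin}$ and $\Ch_{\Fin}\hookrightarrow\Ch_{\hFin}$ each induce equivalences on $K$-theory; verifying the Saturation Axiom (\cref{saturation-lemma}), the Cylinder Axiom (\cref{cylinder-lemma}), and the approximation property via a mapping-cylinder factorization is where the bulk of the work in the proof of \cref{theorem:splitting} lies, and this is precisely what your phrase ``applying the $K$-theory machinery from \cref{section:k-theory}'' conceals. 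The side remark about bypassing projectives via the $+$-construction does not apply here, since $\BK$ is not the $K$-theory of a ring but of a Waldhausen category of weighted complexes, and no group-completion model for it is available in this setup. Finally, your stated ``main obstacle'' (whether $c\circ s$ is the identity on the nose versus up to natural isomorphism) is not a real concern, since an exact functor naturally isomorphic to the identity already induces the identity on $K$-theory up to homotopy; the actual obstacle is the one above.
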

Given the existence of the relative theory, it is a relevant question
as to whether or not it is nontrivial---in other words, are $\BK$-theory and
$\K$-theory actually different?  There is evidence to believe that the
following is true.
\begin{conjecture}
  Suppose a group $G$ is of type $\FL$ and is not $\mathcal B$-$\SIC$.
  Then $[C_\star(EG)] - \chi(G)$ represents a nonzero element
  in $\BKrel_0(\Z[G])$, where $EG$ is the homogeneous
  bar resolution of $G$.
\end{conjecture}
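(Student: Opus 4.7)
The plan is to detect the element $[C_\star(EG)] - \chi(G)$ as nonzero in $\BKrel_0(\Z[G])$ by pushing it through a bounded trace whose target measures the failure of $\B$-$\SIC$. First, I would interpret $[C_\star(EG)]$ as a class in $\BK_0(\Z[G])$: the homogeneous bar resolution carries a natural weighted chain-complex structure coming from the word length on $G$, and its bounded Euler characteristic defines the desired class via the construction of $\BK(-)$ developed earlier in the paper. The element $\chi(G)$ is viewed in $\BK_0(\Z[G])$ via the functorial section $K \to \BK$ furnished by the Splitting Theorem.

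Next, using the $\FL$ hypothesis, pick a finite free resolution $F_\bullet$ of the trivial module $\Z$, so that $\chi(G) = \sum_i (-1)^i [F_i]$ in $K_0(\Z[G])$. Forgetting weights, $C_\star(EG)$ and $F_\bullet$ are two $\Z[G]$-free resolutions of the same module, hence chain homotopy equivalent, so their Euler-characteristic classes coincide in $K_0(\Z[G])$. Thus $[C_\star(EG)] - \chi(G)$ lies in the kernel of the forgetful map $\BK_0(\Z[G]) \to K_0(\Z[G])$, which by the Splitting Theorem is exactly the direct summand $\BKrel_0(\Z[G])$.

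The real content is nontriviality. I would construct a bounded Dennis-type trace from $\BK_\star(\Z[G])$ to a bounded refinement of Hochschild (or cyclic) homology of $\Z[G]$, compatible with the classical Dennis trace $K_\star \to HH_\star$, and therefore restricting on the relative summand to land in a relative bounded Hochschild-homology group. Under the standard identification of $HH_\star(\Z[G])$ with group homology, and its weighted analogue with a bounded variant of $H_\star(G)$, the assumption that $G$ is not $\B$-$\SIC$ provides a coefficient module $V$ against which the image of $[C_\star(EG)] - \chi(G)$ pairs nontrivially; the image itself should be essentially the universal obstruction to boundedly chain-contracting the difference of the two resolutions.

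The main obstacle is precisely that final identification. The bounded Dennis trace must be constructed with enough functoriality to land in the relative summand, and one must show that its value on $[C_\star(EG)] - \chi(G)$ is the universal obstruction to $\B$-$\SIC$, not merely some a priori weaker invariant. This requires a bounded Goodwillie--Jones-type comparison relating $\BK$-theory to bounded cyclic homology, together with a careful alignment between the weighting on $C_\star(EG)$ and the weighting on the cyclic bar construction for $\Z[G]$. These compatibilities are technically delicate, and are presumably why the statement has been left as a conjecture rather than a theorem.
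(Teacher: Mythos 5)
This statement is labeled a \emph{Conjecture} in the paper; the authors do not supply a proof, and in fact the surrounding discussion only establishes the \emph{converse} implication: the Observation shows that if $G$ is of type $\B\FP^\infty$ (equivalently, by Theorem~3 of the cited reference, if $G$ is $\B$-$\SIC$), then $[C_\star(EG)] = 0$ in $\BKrel_0(\Z[G])$, and the contrapositive gives the Theorem that a nonzero obstruction forces failure of $\B$-$\SIC$. The conjecture you are asked about is the opposite direction, and the paper leaves it open precisely because the nontriviality question is unresolved.

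Your proposal gets the easy half right and, to your credit, is honest about the hard half. The well-definedness argument is sound: $C_\star(EG)$ is an object of $\BCh_{\hFin}(\BoundedProjModulesWeighted(\Z[G]))$ (the $\FL$ hypothesis guarantees homotopy finiteness after forgetting weights), $\chi(G)$ lifts to $\BK_0$ via the section furnished by the proof of Theorem~\ref{theorem:splitting}, and the difference dies in $K_0(\Z[G])$ because the weighted bar resolution and any finite free resolution become chain homotopy equivalent once weights are forgotten; by the splitting, this places $[C_\star(EG)] - \chi(G)$ in the summand $\BKrel_0(\Z[G])$. This much is consistent with the paper's framing of $\BKrel_0$ as a bounded Wall group.

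The genuine gap is the detection step, and your proposed route (a bounded Dennis or Goodwillie--Jones trace into a bounded Hochschild or cyclic homology theory that pairs with the $\B$-bounded cohomology witnessing failure of $\B$-$\SIC$) is a plausible program but not an argument. None of the needed ingredients exist in the paper: there is no construction of a bounded trace compatible with the splitting, no identification of a bounded $HH_\star(\Z[G])$ with a weighted variant of $H_\star(G)$, and no reason given that the image of the relative class under such a trace would be the \emph{universal} obstruction rather than a weaker shadow. In particular, failure of $\B$-$\SIC$ is a cohomological statement about \emph{some} suitable coefficient module $V$, and bridging from that to a nonvanishing of a $K_0$-class requires exactly the kind of bounded Goodwillie--Jones comparison you flag as delicate. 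Since the paper itself treats this as an open conjecture, your assessment that the obstacle lies in this final identification is accurate; but as written, the proposal does not constitute a proof and should not be read as one.
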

The relative group $\BKrel_0(\Z[G])$ represents a bounded version of
the Wall group, and measures precisely the obstruction of a
homotopically finite weighted chain complex over $\Z[G]$ to being
homotopically finite via a $\mathcal B$-bounded chain homotopy.  In
\cite{2010arXiv1004.4677J}, it was shown that there exists a closed
3-dimensional solvmanifold $M^3$ with $\pi = \pi_1 M$ for which there
exists an element $t^2 \in H^2(\pi;\C)$ not in the image of the
comparison map $\PH^2(\pi;\C) \to H^2(\pi;\C)$.  Thus, as a particular
case of the above conjecture, we formulate
\begin{conjecture}
  For $\pi = \pi_1 M^3$ as above, the class $$[C_\star(E\pi)] \neq 0
  \in \PKrel_0(\Z[\pi])$$ represents an element of infinite order,
  where $\mathcal P$ is the bounding class of polynomial functions.
\end{conjecture}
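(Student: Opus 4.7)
The plan is to verify that the specific group $\pi = \pi_1(M^3)$ satisfies the hypotheses of the preceding conjecture, and then to upgrade ``nonzero'' to ``infinite order'' by means of a bounded Chern-character map that detects the obstruction class $t^2$ directly.

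First I would verify the input hypotheses. A closed solvmanifold is aspherical (as a $K(\pi,1)$), so $E\pi \simeq \widetilde{M}^3$ and $C_\star(E\pi) = C_\star(\widetilde M^3)$ is a finite free $\Z[\pi]$-chain complex; hence $\pi$ is of type $\FL$, and $\chi(\pi) = \chi(M^3) = 0$ since $M^3$ is closed and odd-dimensional, so the class of interest is literally $[C_\star(E\pi)]$. By the result of \cite{2010arXiv1004.4677J} cited above, there exists $t^2 \in H^2(\pi;\C)$ not in the image of $\PH^2(\pi;\C) \to H^2(\pi;\C)$, so $\pi$ fails to be $\mathcal P$-$\SIC$ and the general conjecture (assumed, or applied in this special case) gives $[C_\star(E\pi)] \neq 0$ in $\PKrel_0(\Z[\pi])$.

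To promote this to infinite order, I would construct a relative bounded Chern--Dennis character
$$
\mathrm{ch}^{\mathrm{rel}}\colon \PKrel_0(\Z[\pi]) \longrightarrow \PH^{\mathrm{rel},\mathrm{ev}}(\pi;\C),
$$
landing in the homotopy fiber (in even degrees) of the comparison map $\PH^\star(\pi;\C) \to H^\star(\pi;\C)$, by pairing the splitting $\BK \simeq \K \times \BKrel$ from Theorem~\ref{theorem:splitting} with the usual Chern character out of algebraic $K$-theory, carried out compatibly in the $\mathcal P$-bounded category of weighted chain complexes. The target in degree $2$ fits in an exact sequence involving the cokernel of $\PH^2(\pi;\C) \to H^2(\pi;\C)$, in which $t^2$ represents a nonzero class. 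The main computation is to show that the finiteness obstruction of the finite free resolution $C_\star(\widetilde M^3)$, viewed as a \emph{weighted} chain complex with metric inherited from the word length on $\pi$, maps under $\mathrm{ch}^{\mathrm{rel}}$ to a class whose degree-$2$ component is a nonzero multiple of $[t^2]$ in the cokernel. Once established, infinite order is immediate, since $H^2(\pi;\C)$ is a $\C$-vector space and any nonzero element has infinite additive order, a property inherited by the cokernel and pulled back through $\mathrm{ch}^{\mathrm{rel}}$.

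The hard part will be the construction and computation of $\mathrm{ch}^{\mathrm{rel}}$ at the chain level with $\mathcal P$-bounded control, in particular showing that the natural Hattori--Stallings/Dennis trace representative of $[C_\star(E\pi)] - \chi(\pi)$ refines to a $\mathcal P$-bounded cyclic cycle whose class in the relative theory is precisely the obstruction witnessed by $t^2$. A second subtle point is naturality: the character must be defined on the \emph{relative} $K$-theory, so one must verify that the Chern character factors through the splitting of Theorem~\ref{theorem:splitting} and kills the $\K(\Z[\pi])$-summand, so that the resulting pairing with $t^2$ genuinely lives in $\PKrel_0$ and not merely in $\BK_0$. Once these compatibilities are in place, the argument is essentially a transport of the classical ``the finiteness obstruction is detected by the Chern character of the Euler characteristic operator'' story to the $\mathcal P$-bounded setting.
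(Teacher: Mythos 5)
The statement you are addressing is stated in the paper as a \emph{conjecture}, not a theorem: the authors offer no proof, and the surrounding discussion (the spectral sequence computation from \cite{2010arXiv1004.4677J} showing $\pi$ fails to be $\mathcal{P}$-$\SIC$, plus the contrapositive of the preceding theorem) only motivates \emph{nonvanishing}, not infinite order. So there is no proof in the paper to compare against, and you should be clear that your write-up is a proposed strategy for an open problem rather than a reconstruction of the authors' argument.

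Evaluated on its own terms, your setup is correct and worth keeping: solvmanifolds are aspherical, so $C_\star(E\pi) = C_\star(\widetilde{M}^3)$ is finite free of type $\FL$, and $\chi(M^3) = 0$ since $M^3$ is closed and odd-dimensional, so the element of interest is $[C_\star(E\pi)]$ itself. But from there the proposal rests on two steps that are not in place. First, you invoke the paper's \emph{first} conjecture to conclude $[C_\star(E\pi)] \neq 0$; that conjecture is also unproven, so you would be proving a conditional statement at best. Second, and more seriously, the entire mechanism for detecting infinite order — a relative $\mathcal{P}$-bounded Chern--Dennis character $\mathrm{ch}^{\mathrm{rel}}\colon \PKrel_0(\Z[\pi]) \to \PH^{\mathrm{rel},\mathrm{ev}}(\pi;\C)$, together with the computation that it sends the finiteness obstruction to a nonzero multiple of $[t^2]$ — does not exist in the paper (the authors develop no cyclic or trace-theoretic machinery in the bounded setting) and is not constructed in your proposal either. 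You flag these as ``the hard part,'' but they are not a hard part: they are the whole content of any would-be proof of infinite order. There is also a sign of confusion in the plumbing: $\BKrel$ is a homotopy \emph{fiber}, whereas $t^2$ witnesses non-surjectivity and so naturally lives in a \emph{cokernel}; the degree-shift reconciling the two needs to be stated and checked, not gestured at, and the remark that one must ``verify that the Chern character \ldots{} kills the $\K(\Z[\pi])$-summand'' is not the right requirement (one restricts along the inclusion of the $\BKrel$-factor of the splitting from Theorem~\ref{theorem:splitting}; the character need not vanish on the complementary summand). Until the bounded trace is actually built with $\mathcal{P}$-control at the chain level and the target computation is carried out, this is a plausible research program, not an argument.
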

The theory presented here may be thought of as the ``linearized''
version of Waldhausen $K$-theory for weighted spaces, a topic we hope
to address more completely in some future work.  It is clear that much
more needs to be said about even the groups $\BK_0(\Z)$, which are at
this point completely unknown even for the polynomial bounding class.
This paper should be seen as an introduction to the theory.

\section{Bounded homotopies of weighted chain complexes}

\subsection{Weighted modules and bounding classes}

\subsubsection{Bounding classes}

We begin by recalling the definition of a bounding class
\cite{2010arXiv1004.4677J, MR2575390}.  Let $\mathcal{S}$ denote the
set of non-decreasing functions $\nonneg\R \to \nonneg\R$.  A
collection of functions $\mathcal{B} \subset \mathcal{S}$ is
\defnword{weakly closed} under the operation $\varphi : \mathcal{S}^n
\to \mathcal{S}$ if, for each $(f_1, \ldots,f_n) \in \mathcal{B}^n$,
there is an $f \in \mathcal{B}$ with $\varphi(f_1,\ldots,f_n) < f$.  A
\defnword{bounding class} is a subset $\mathcal{B} \subset
\mathcal{S}$ such that
\begin{enumerate}[label=($\mathcal{B}$C\arabic*),leftmargin=*]
\item $\mathcal{B}$ contains the constant function 1,
\item $\mathcal{B}$ is weakly closed under positive rational linear combinations, and
\item $\mathcal{B}$ is weakly closed under the operation $(f,g)
  \mapsto f \circ g$ for $f \in \mathcal{B}$ and $g \in \mathcal{L}$.
\end{enumerate}
Here, $\mathcal{L}$ denotes the linear bounding class $\left\{ f(x) =
  ax + b \middle\arrowvert a, b \in \nonneg\Q \right\}$.  Other
examples include the polynomial bounding class $\mathcal{P}$, the
bounding class $\mathcal{E}$ of simple exponential functions, and the
bounding class $\tilde{\mathcal{E}}$ of iterated exponential
functions.  It is easy to see that any bounding class $\mathcal B$ can
be closed under the operation of composition to form a bounding class
$\mathcal B'$ containing $\mathcal B$.

A bounding class $\mathcal{B}$ is \defnword{composable} if
$\mathcal{B}$ is weakly closed under the operation $(f,g) \mapsto f
\circ g$ for $f, g \in \mathcal{B}$.  The polynomial bounding class
$\mathcal{P}$ is composable; the exponential bounding class
$\mathcal{E}$ is not. Note, however, that any bounding class admits a closure
under the operation of composition, and thus for any $\mathcal B$ there is
(up to suitable equivalence) a smallest composable bounding class ${\mathcal B}'$
with ${\mathcal B}\subseteq {\mathcal B}'$.

We will write $\mathcal B' \preceq \mathcal B$ if, for every $f' \in
\mathcal B'$, there is an $f \in \mathcal B$ for which $f(x) \geq
f'(x)$ for all large $x$.  Also, we write $\mathcal B' \prec \mathcal
\mathcal B$ if $\mathcal B' \preceq \mathcal B$ and $\mathcal B
\not\preceq \mathcal B'$.

\subsubsection{Weights}

A \defnword{weighted set} $(X,w_X)$ is simply a set $X$ with a
function $w_X : X \to \nonneg\R$.  The weights are part of the data of
a weighted set, but whether a morphism of weighted sets $m : (X,w_X)
\to (Y,w_Y)$ is ``bounded'' depends on the choice of a bounding class
$\mathcal B$; a $\mathcal B$-bounded set map $m : (X,w_X) \to (Y,w_Y)$
is a map for which there exists $f \in \mathcal B$ so that
$$
w_Y(m(x)) \leq f(w_X(x))
$$
for all $x \in X$.

Note that when $X$ is finite, a morphism $m:(X,w_X)\to (Y,w_Y)$ is 
$\mathcal B$-bounded for any choice of bounding class $\mathcal B$ and
weight function $w_X$ on $X$. The distinction between
``bounded'' and ``unbounded'' only arises when the domain is
an infinite set.

Weighted sets can be considered in an equivariant context.  For a
group $G$ generated by $S$ where $S = S \cup S^{-1}$, there is a
natural notion of weight: a function $L : G \to \R^{\geq 0}$ is a
\defnword{length function} if $L(gh) < L(g) + L(h)$ and $L(g) =
L(g^{-1})$ for $g, h \in G$.  A length function is a \defnword{word
  length function} if $L(1) = 1$ and there is a function $\varphi : S
\to \nonneg\R$ so that
$$
L(g) = \min \left\{ \sum_{i=1}^n \varphi(x_i)
  \middle\arrowvert x_i \in S,\, x_1 x_2 \cdots x_n = g \right\}.
$$
Given a discrete group $G$ with length function $L$, a
\defnword{weighted $G$-set} is a weighted set $(S,w_S)$ with a
$G$-action on $S$, satisfying
$$
w_S(gs) \leq C \cdot \left( L(g) + w_S(s) \right)
$$
for all $g \in G$ and $s \in S$.  Analogous to the nonequivariant
case, when given a bounding class $\mathcal B$, we may consider the
$\mathcal B$-bounded maps of weighted $G$-sets.

\subsubsection{Free weighted modules}

We consider modules for which the elements are weighted; just as with
weighted sets, for each bounding class $\mathcal B$, we may consider
$\mathcal B$-bounded morphisms.
\begin{definition}
\label{definition:seminorms}
Let $R$ be a normed ring (in applications, $R$ will often be $\Z$),
Given a weighted set $(S,w_S)$, the free $R$-module $R[S]$ receives a
seminorm for every $f \in \mathcal B$, via
$$
\norm{\sum_{s \in S} \alpha_s s}_f =
  \sum_{s \in S} \abs{\alpha_s} f( w_S(s) ).
$$
With this setup, we call $R[S]$ a \defnword{weighted $R$-module}.  If
$(S,w_S)$ is weighted $G$-set, then $R[S]$ has the additional
structure of a \defnword{weighted $R[G]$-module}; again, for any
bounding class $\mathcal B$, the $R[G]$-module $R[S]$ can be equipped
with a collection of seminorms indexed by $\mathcal B$.
\end{definition}
One particular example will be important in applications: a
\defnword{free weighted $R[G]$-module} is a module of the form
$R[G][X] = R[G \times X]$, where $X$ is a weighted set $(X,w_X)$, and
$G \times X$ is the weighted $G$-set with weight
$$
w_{G \times X}(g,x) = L(g) + w_X(x).
$$

If $R[G][X_1]$ and $R[G][X_2]$ are two such free weighted $R[G]$-modules,
then their direct sum $R[G][X_1]\oplus R[G][X_2]$ is again a free weighted
$R[G]$-module via the identification 
\[
R[G][X_1]\oplus R[G][X_2]\cong R[G][X_1 \sqcup X_2]
\]
where the weight function on $X_1\sqcup X_2$ is the obvious one whose restriction
to $X_i$ is $w_{X_i}$.

Given two free weighted $R[G]$-modules, a natural next step is to consider
bounded maps between them---but bounded in what sense?  A map of free
$R[G]$-modules $\varphi : R[G][X] \to R[G][Y]$ is
\defnword{$\mathcal{B}$-bounded (in the sense of Dehn functions)} if
there exists $f \in \mathcal B$ so that for all $a \in R[G][X]$
$$
\norm{\varphi(a)}_{\id} \leq f\left( \norm{a}_{\id} \right)
$$
where $\norm{-}_{\id}$ means the weighted $\ell_1$-norm
$$
\norm{ \sum_{i} r_i s_i }_{\id} = \sum_{i} \abs{r_i} \cdot w_S(s_i).
$$

Alternatively, we say that $\varphi : R[G][X] \to R[G][Y]$ is
\defnword{$\mathcal{B}$-bounded (in the sense of functional analysis)}
if, for every $f \in \mathcal{B}$, there exists an $f' \in
\mathcal{B}$, so that for all $x \in R[G][X]$ the inequality $\norm{\varphi(x)}_f <
\norm{x}_{f'}$ holds.

This second notion (boundedness in the functional analytic sense) is in general
stronger than the first, but there are situations in which these two
notions agree.  For example, a $\mathcal B$-bounded map of sets $m :
(X,w_X) \to (Y,w_Y)$ induces a map $R[G][X] \to R[G][Y]$ which is
$\mathcal B$-bounded in either of the two senses.  Under mild
hypotheses on $R$ and on the bounding class $\mathcal B$, the same is
true for not necessarily based maps.
\begin{lemma}
  \label{boundedness-notions-agree}
  Consider two weighted sets $(X,w_X)$ and $(Y,w_Y)$, and suppose $R$
  is a normed ring, with the norm $\norm{-} : R \to (\epsilon,\infty)$
  where $\epsilon > 0$, and $\varphi : R[G][X] \to R[G][Y]$ is an
  $R[G]$-module map which is $\mathcal B$-bounded in the sense of Dehn
  functions.  If $\mathcal B \succeq \mathcal L$, then $\varphi$ is
  $\mathcal B$-bounded in the sense of functional analysis.
\end{lemma}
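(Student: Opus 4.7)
The idea is to leverage the uniform lower bound $\norm{r} > \epsilon$ on nonzero ring elements to upgrade the Dehn bound (a single estimate involving only $\norm{\cdot}_{\mathrm{id}}$) into simultaneous bounds with respect to every seminorm $\norm{\cdot}_h$, $h \in \mathcal{B}$. The essential observation is that $\norm{\cdot}_{\mathrm{id}}$ is a weighted $\ell^1$-norm, so a single inequality $\sum_j \abs{t_j}\, w(u_j) \leq C$ automatically controls each individual weight $w(u_j) \leq C/\epsilon$ (via $\abs{t_j} > \epsilon$), as well as the total coefficient mass $\sum_j \abs{t_j} \leq C$ (since weights on $G \times Y$ are bounded below by a positive constant, by the word-length axiom).

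Concretely, I would proceed in three steps. First, since $\varphi$ is $R[G]$-linear, it is determined by its values $\varphi(x) = \sum_k c_{x, k}(g_{x, k}, y_{x, k})$ on $X$; applying the Dehn bound to each basis element $(1, x)$ gives the pointwise estimates
$$\sum_k \abs{c_{x, k}} \;\leq\; f(w((1, x))),\qquad L(g_{x, k}) + w_Y(y_{x, k}) \;\leq\; f(w((1, x)))/\epsilon.$$
Second, for a general $a = \sum_i r_i(g_i, x_i)$, expand $\varphi(a)$ using $R[G]$-linearity, apply the triangle inequality for $\norm{\cdot}_h$, and combine the monotonicity of $h$ with $L(g_i g_{x_i, k}) \leq L(g_i) + L(g_{x_i, k})$ and the above pointwise estimates. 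Setting $z_i := w((g_i, x_i))$, the result is a termwise estimate
$$\norm{\varphi(a)}_h \;\leq\; \sum_i \abs{r_i}\, h(z_i + f(1 + z_i)/\epsilon)\, f(1 + z_i).$$
Third, set $h'(z) := h(z + f(1 + z)/\epsilon)\, f(1 + z) + 1$; then $\norm{\varphi(a)}_h < \norm{a}_{h'}$ termwise, as desired.

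The main obstacle is the final verification that $h' \in \mathcal{B}$. The inner function $z \mapsto z + f(1 + z)/\epsilon$ lies in $\mathcal{B}$ via $\mathcal{B} \succeq \mathcal{L}$ combined with axioms ($\mathcal{B}$C2) and ($\mathcal{B}$C3). However, the subsequent composition of $h$ with this inner function, and the subsequent product with $f(1 + z)$, demand closure of $\mathcal{B}$ under composition of two $\mathcal{B}$-functions and under products---neither of which is explicit in ($\mathcal{B}$C1)--($\mathcal{B}$C3). I would treat these additional closure properties as part of the ``mild hypotheses on $\mathcal{B}$'' alluded to in the statement; they hold automatically for the standard examples $\mathcal{P}$, $\mathcal{E}$, and $\tilde{\mathcal{E}}$, and composition-closure can in any event be arranged by passing to the composable closure of $\mathcal{B}$ discussed earlier.
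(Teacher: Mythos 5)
Your proposal takes the same basic route as the paper: apply the Dehn bound to basis elements, then pass to general elements by $R[G]$-linearity and the triangle inequality for the seminorms. The paper's version is terse --- it asserts the clean estimate $\norm{\varphi(a)}_h \leq \norm{a}_{h\circ f}$ (Claims 1 and 2) and defers the verification to Lemma~1 of the cited reference --- whereas you carry out the bookkeeping explicitly, and in doing so arrive at the messier but more defensible bound $h'(z) = h\bigl(z + f(1+z)/\epsilon\bigr)\,f(1+z) + 1$. Your explicit derivation makes visible what the paper's proof suppresses: the hypothesis $\norm{-} : R \to (\epsilon,\infty)$ is what lets you convert the single $\ell^1$-inequality from the Dehn bound into separate controls on the total coefficient mass and on each individual weight, and this really is where $\epsilon$ enters. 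You are also right to flag the closure issue at the end: the paper's $h \circ f$ with $h,f\in\mathcal{B}$ lies in $\mathcal{B}$ only if $\mathcal{B}$ is composable (axiom ($\mathcal{B}$C3) only permits composition with linear inner functions), and your $h'$ additionally needs closure under products. The lemma as stated does not literally guarantee either, so strictly speaking the lemma should be read with ``composable $\mathcal{B}$'' or ``after replacing $\mathcal{B}$ by its composable closure $\mathcal{B}'$'' understood --- the paper acknowledges that such a closure always exists, and all the standard examples ($\mathcal{P}$, $\mathcal{E}$, $\tilde{\mathcal{E}}$) have the needed closure properties. One small point to tighten: your claim that weights on $G \times Y$ are uniformly bounded below needs the observation that $L(1)=1$ handles the identity, while for $g\neq 1$ a positive lower bound requires $S$ finite (or at least $\inf_{s\in S}\varphi(s)>0$); this is implicit in the ``finitely generated with word length'' setting but worth stating. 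Overall the proposal is correct, faithful to the paper's strategy, and more careful than the paper's own sketch about where the hypotheses are used.
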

\begin{proof}
  By assumption, there exists $f \in \mathcal B$, so that
  $\norm{\varphi(a)}_{\id} \leq f\left( \norm{a}_{\id} \right)$ for
  all $a \in R[G][X]$.  One then verifies the following two claims.
\begin{description}
\item[Claim 1.] $\norm{\varphi(a)}_h \leq (h \circ f)( \norm{a}_{\id}
  )$.
\end{description}
Evaluating $\varphi$ on basis elements shows 
$\norm{\varphi(gx)}_h \leq (h \circ f)( \norm{gx}_{\id} ) =
\norm{gx}_{h \circ f}$.
\begin{description}
\item[Claim 2.] For a general element $a = \sum \lambda_{g,x} g x$ one
  has a sequence of inequalities 
\begin{align*}
\norm{\varphi(a)}_h
&\leq \sum_{g,x} |\lambda_{gx}| \cdot \norm{\varphi(gx)}_{h}  \\
&\leq \sum_{g,x} |\lambda_{gx}| \cdot \norm{gx}_{h \circ f} \\
&\leq \norm{a}_{h \circ f}.
\end{align*}
\end{description}
The arguments for these two claims is as given in Lemma~1 of
\cite{2010arXiv1004.4677J}.
\end{proof}
In light of \cref{boundedness-notions-agree}, we will assume that
$\mathcal B \succeq \mathcal L$ for the remainder of this paper.  When
we speak of $\mathcal{B}$-boundedness without any qualification, we
mean $\mathcal{B}$-bounded in the functional analytic sense; this is the
more natural notion from the bornological perspective.

For maps between not necessarily free weighted $R[G]$-modules, the
relationship between the two notions of boundedness the situation is less
clear, but we do have $\mathcal B$-boundedness for an important class
of morphisms.

\begin{proposition}
\label{prop:multiplication-is-bounded}
Let $S$ be a weighted $G$-set (on which the $G$-action is not
necessarily free), $R$ a normed ring, with the module $M = R[S]$
having a family of seminorms coming from a bounding class $\mathcal
B$, as in \cref{definition:seminorms}.  Then left multiplication by
any element of $R[G]$ is $\mathcal B$-bounded.

\end{proposition}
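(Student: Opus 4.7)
The plan is to write out left multiplication explicitly, apply the triangle inequality and the weighted $G$-set condition term-by-term, and then repackage the resulting estimate using the bounding-class axioms.

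First, fix $\gamma \in R[G]$ and write it as a finite sum $\gamma = \sum_{g \in F} r_g g$ with $F \subset G$ finite. Set $L_\gamma = \max_{g \in F} L(g)$ and $R_\gamma = \sum_{g \in F} \abs{r_g}$; both are finite and depend only on $\gamma$. For an arbitrary element $a = \sum_s \alpha_s s \in M$, I expand $\gamma \cdot a = \sum_{g \in F,\, s \in S} r_g \alpha_s\, (gs)$. Applying the triangle inequality for the seminorm $\norm{-}_f$, submultiplicativity of the norm on $R$, the weight hypothesis $w_S(gs) \leq C(L(g) + w_S(s))$ for the $G$-action on $S$, and monotonicity of $f$ gives
\begin{align*}
\norm{\gamma \cdot a}_f
  &\leq \sum_{g \in F,\, s \in S} \abs{r_g}\,\abs{\alpha_s}\, f\bigl(C(L(g) + w_S(s))\bigr) \\
  &\leq R_\gamma \sum_{s \in S} \abs{\alpha_s}\, f\bigl(C(L_\gamma + w_S(s))\bigr),
\end{align*}
where in the second line I used $L(g) \leq L_\gamma$ for $g \in F$ and monotonicity of $f$.

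Next, set $\tilde f(x) = R_\gamma \cdot f\bigl(C(L_\gamma + x)\bigr)$, so the estimate above reads $\norm{\gamma \cdot a}_f \leq \sum_s \abs{\alpha_s}\,\tilde f(w_S(s))$. It remains to produce an honest $f' \in \mathcal{B}$ with $\tilde f(x) \leq f'(x)$ for $x$ large, after which $\norm{\gamma \cdot a}_f \leq \norm{a}_{f'}$ gives the desired functional-analytic boundedness. The affine function $x \mapsto C(L_\gamma + x)$ is dominated by a linear function with positive rational coefficients, hence by an element of $\mathcal{L}$; weak closure of $\mathcal{B}$ under composition with $\mathcal{L}$ then yields an element of $\mathcal{B}$ dominating $f \circ \bigl(C(L_\gamma + \cdot)\bigr)$; and finally weak closure of $\mathcal{B}$ under positive rational linear combinations absorbs the constant factor $R_\gamma$ (bounded above by any sufficiently large positive rational), producing the required $f' \in \mathcal{B}$.

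The main subtlety, rather than any real obstacle, is the bookkeeping in the last step: the real constants $C$, $L_\gamma$, and $R_\gamma$ need not be rational, but they must be absorbed into a genuine member of $\mathcal{B}$. This is handled entirely by the weak-closure axioms together with the standing hypothesis $\mathcal{B} \succeq \mathcal{L}$ in force from \cref{boundedness-notions-agree} onward; since bounding-class comparisons are taken up to eventual domination, dominating each real constant by a positive rational suffices.
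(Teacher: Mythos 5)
Your proof is correct, but it takes a genuinely different route from the paper's. The paper's argument is symmetric in the two factors: writing $a = \sum a_g g$ and $b = \sum b_s s$, it splits the double sum according to whether $L(g) \leq w(s')$ or $L(g) \geq w(s')$, uses $f(L(g)+w(s')) \leq f(2\max(L(g),w(s')))$, and arrives at the Leibniz-type estimate $\norm{ab}_f \leq 2\norm{a}_F\norm{b}_F$ for a single $F \in \mathcal B$ depending only on $f$. Your argument instead exploits the finite support of the \emph{fixed} multiplier $\gamma$: since $L(g) \leq L_\gamma$ on the support, the weight increment is a constant, and all the constants $(R_\gamma, C, L_\gamma)$ get absorbed into a new $f' \in \mathcal B$ via the closure axioms. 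What your approach buys is simplicity — no case split, and a more direct path through the axioms. What the paper's approach buys is a bilinear estimate uniform in both factors, which is exactly the shape of the axiom $\norm{rm}_w \leq \norm{r}_w\norm{m}_1 + \norm{r}_1\norm{m}_w$ imposed on general weighted rings in the next subsection; your bound, by contrast, has $f'$ depending on $\gamma$, which suffices for the proposition as stated but does not directly yield that form.

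Two small remarks. First, your closing sentence about ``eventual domination'' is a red herring: the weak-closure axioms ($\mathcal{B}$C1)--($\mathcal{B}$C3) already give strict pointwise domination $\varphi(f_1,\ldots,f_n) < f$ for all inputs, so you do not need $\preceq$ and can simply choose rationals $a > C$, $b > CL_\gamma$, $q > R_\gamma$ and run the chain $R_\gamma f(C(L_\gamma + x)) \leq q\, f(ax+b) < q\,h(x) < f'(x)$ with $h, f' \in \mathcal B$ supplied by ($\mathcal{B}$C3) and ($\mathcal{B}$C2). Second, you are actually more careful than the paper on one point: the paper's displayed chain of inequalities silently takes the constant $C$ in the weighted $G$-set condition $w_S(gs) \leq C(L(g)+w_S(s))$ to be $1$, whereas you carry $C$ through explicitly.
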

\begin{proof}[Proof (compare to Proposition~1 in \cite{2010arXiv1004.4677J}):]
Suppose $a = \sum_{g \in G} a_g\,g \in R[G]$ and $b = \sum_{s \in S}
b_s\,s \in R[S]$.  Given $f \in \mathcal B$,
choose $f_2 \in \mathcal B$ so that $f_2(x) \geq f(2x)$ and
choose $F \in \mathcal B$ so that $F(x) \geq \max \{ x, f_2(x)
\}$.

\begin{align*}
  \norm{ab}_f &= \norm{ \left( \sum_{g \in G} a_g\,g \right) \cdot
    \left( \sum_{s \in S} b_{s}\,s \right)}_f \\
&= \sum_{s \in S} \abs{ \sum_{g s' = s} a_g\,b_{s'} } f(w(s)) \\
&\leq \sum_{s \in S} \sum_{g s' = s} \abs { a_g\,b_{s'} } f\left(L(g) +
  w(s')\right) \\
&\leq \left( \sum_{s \in S} \sum_{\substack{g s' = s \\ L(g) \leq w(s')}}
\abs { a_g\,b_{s'} } f(2w(s')) \right) + \left( 
\sum_{s \in S} \sum_{\substack{g s' = s \\ L(g) \geq w(s')}}
\abs { a_g\,b_{s'} } f(2L(g)) \right) \\
&\leq \norm{ \sum_{g \in G} a_g g }_1 \norm{ \sum_{s \in s} b_s s }_{f_2} + 
\norm{ \sum_{g \in g} a_g g }_{f_2} \norm{ \sum_{s \in S} b_s s }_{1} \\
&\leq 2 \norm{ \sum a_g g}_F \norm{ \sum b_s s }_F = 2 \norm{ a }_F
\cdot \norm{b}_F.
\end{align*}
\end{proof}

\Cref{prop:multiplication-is-bounded} is true even when $X$ is an
infinite set; in the case of maps between finitely generated weighted
modules, much more is true.
\begin{proposition}
  \label{maps-are-bounded}
  Let $\mathcal{B}$ be a bounding class, $G$ a group with word length,
  and $X$ resp. $Y$ finite weighted sets.  Then every $R[G]$-module
  map $h : R[G][X] \to R[G][Y]$ is $\mathcal{B}$-bounded.
\end{proposition}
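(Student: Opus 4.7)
The plan is to exploit finiteness of $X$ and $Y$ to reduce to the case of \cref{prop:multiplication-is-bounded}. The key observation is that because $X$ is finite, every $\alpha \in R[G][X]$ has a unique finite expression $\alpha = \sum_{x \in X} b_x \cdot x$ with $b_x \in R[G]$, and by left $R[G]$-linearity of $h$,
\begin{equation*}
h(\alpha) = \sum_{x \in X} b_x \cdot h(x),
\end{equation*}
where each $h(x) = \sum_{y \in Y} a_{x,y} \cdot y \in R[G][Y]$ is a fixed element.

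Given $f \in \mathcal{B}$, I would bound $\norm{h(\alpha)}_f$ by first applying the triangle inequality over the finite index set $X$ and then invoking \cref{prop:multiplication-is-bounded} on each term $\norm{b_x \cdot h(x)}_f$. That proposition produces an $F \in \mathcal{B}$, depending only on $f$, with $\norm{b_x \cdot h(x)}_f \leq 2\,\norm{b_x}_F \cdot \norm{h(x)}_F$. Because $X$ is finite, the quantity $C := \max_{x \in X} \norm{h(x)}_F$ is a single finite real number, leaving a bound of the shape $\norm{h(\alpha)}_f \leq 2C \sum_{x \in X} \norm{b_x}_F$.

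The last step is to absorb $\sum_x \norm{b_x}_F$ into the seminorm $\norm{\alpha}_{f'}$ on $R[G][X]$ for a suitable $f' \in \mathcal{B}$. Since the weight on $G \times X$ satisfies $w_{G\times X}(g,x) = L(g) + w_X(x) \geq L(g)$ and every $F \in \mathcal{B}$ is non-decreasing, the sum $\sum_x \norm{b_x}_F$ (each computed in $R[G]$ using $L$ alone) is automatically dominated by $\norm{\alpha}_F$. The multiplicative overhead $2C$ is then swept into the bound by choosing $f' \in \mathcal{B}$ with $f' > 2C \cdot F$, which exists by weak closure of $\mathcal{B}$ under positive rational scalar multiples.

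The only real content is the invocation of \cref{prop:multiplication-is-bounded}; the remaining bookkeeping---passing between the three seminorms on $R[G]$, $R[G][X]$, and $R[G][Y]$---goes through precisely because $X$ is finite, which keeps $C$ finite and makes the extra weight $w_X(x)$ trivially dominated. I expect this final identification of seminorms to be the most delicate step in the write-up, but no step requires genuinely new ideas beyond the preceding proposition.
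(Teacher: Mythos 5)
Your proposal is correct and follows essentially the same strategy as the paper's proof: triangle inequality over the finite index set, the submultiplicative estimate from \cref{prop:multiplication-is-bounded}, and finiteness of $X$ and $Y$ to absorb constants into a new $f' \in \mathcal{B}$. The paper organizes the decomposition as a full matrix expansion $h(\alpha)=\sum_{g,i,j} a_{g,x_i}\,g\,h_{ij}\,y_j$ and re-derives the multiplication estimate inline rather than citing the proposition, but these are only cosmetic differences. In fact your observation that $\sum_x \norm{b_x}_F \leq \norm{\alpha}_F$ holds automatically (because $F$ is non-decreasing and $w_X \geq 0$) is cleaner than the paper's corresponding step, which introduces an unnecessary factor $C = \max_i 1/w_X(x_i)$ that is both superfluous and problematic if some $w_X(x_i)=0$.
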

\begin{proof}
  The sets $X$ and $Y$ are finite; enumerate these sets, $X = \{ x_1,
  \ldots, x_n \}$ and $Y = \{ y_1, \ldots, y_m \}$.  We regard $h$
  as an $n$-by-$m$ matrix $(h_{ij})$ with entries in $R[G]$.

  Given $\alpha = \sum_{g \in G} \sum_{i=1}^n a_{g,x_i}\,g\,x_i \in
  R[G][X]$,
\begin{align*}
h(\alpha)
&= \sum_{g \in G} \sum_{i=1}^n a_{g,x_i}\, g\, h(x_i) 
= \sum_{g \in G} \sum_{i=1}^n \sum_{j=1}^m a_{g,x_i}\, g\, h_{ij} y_j.
\end{align*}
For $f \in \mathcal B$, choose $f_2 \in \mathcal B$ and $f_4 \in
\mathcal B$ so that $f_2(x) \geq f(2x)$ and $f_4(x) \geq f(4x)$.  Then,
\begin{align*}
\norm{h(\alpha)}_f
&= \norm{\sum_{g \in G} \sum_{i=1}^n \sum_{j=1}^m a_{g,x_i}\, g\,
  h_{ij} y_j}_f \\
&\leq \sum_{j=1}^m \norm{\sum_{g \in G} \sum_{i=1}^n a_{g,x_i}\, g\,
  h_{ij} y_j}_f \\
&\leq \sum_{j=1}^m 2 \cdot \norm{\sum_{g \in G} \sum_{i=1}^n a_{g,x_i}\, g\,
  h_{ij}}_{f_2} \norm{y_j}_{f_2} \\
&\leq C_f \cdot \sum_{j=1}^m \norm{\sum_{g \in G} \sum_{i=1}^n a_{g,x_i}\, g\,
  h_{ij}}_{f_2} \\
&\leq C_f \cdot \sum_{j=1}^m 2 \cdot \sum_{g \in G} \sum_{i=1}^n
\norm{a_{g,x_i}\, g}_{f_4} \, \norm{h_{ij}}_{f_4} \\
&\leq C_f \cdot 2 \cdot \sum_{g \in G} \sum_{i=1}^n
\norm{a_{g,x_i}\, g}_{f_4} \, H_{f_4} \\
&= 2 \, C_f \, H_{f_4} \, \sum_{g \in G} \sum_{i=1}^n
\norm{a_{g,x_i}\, g}_{f_4} \\
&\leq 2 \, C_f \, H_{f_4} \, C \, \sum_{g \in G} \sum_{i=1}^n
\norm{a_{g,x_i}\, g\, x_i}_{f_4} \\
&= 2 \, C_f \, H_{f_4} \, C \, \norm{\alpha}_{f_4}.
\end{align*}
Where $C_f = \max_{j}  \norm{y_j}_{f_2}$ and $H_{f_4} = \max_{i,j} \norm{h_{ij}}_{f_4}$ and $C = \max_{i}
1/w_X(x_i)$.
\end{proof}
More generally, \cref{maps-are-bounded} holds 
for infinite sets $X$ and $Y$ and any $R[G]$-module map
represented by a matrix with finitely many non-zero
entries.

\subsubsection{Projective weighted modules and admissible maps}
\label{section:proj-modules}

\begin{definition}
  A \defnword{weighted projective $R[G]$-module} is a pair $(M,p)$,
  where $M$ is a weighted free $R[G]$-module, and $p : M \to M$ is an
  $\mathcal L$-bounded projection map (meaning $p^2 = p$) which
  admits an $\mathcal L$-bounded section (recall that $\mathcal L$
  denotes the bounding class of non-decreasing linear functions).
\end{definition}
By \cref{maps-are-bounded}, if $M$ is finitely generated
as an $R[G]$-module, then any projection $p : M \to M$ is $\mathcal
L$-bounded and admits an $\mathcal L$-bounded section.

In general, given a free weighted $R[G]$-module $R[G][X]$ and a
submodule $M \subset R[G][X]$, the quotient $R[G][X]/M$ inherits an
obvious weighting---called the \defnword{induced weighting}---from the
weighting on $R[G][X]$, by defining the weight of an element to be the
infimum among representatives. This convention for weighting quotients of free
weighted modules extends to direct sums: if $M_i$ is a submodule of $R[G][X_i]$
for $i = 1,2$, then the direct sum 
\[
R[G_1]/M_1 \oplus R[G][X_2]/M_2
\]
inherits a weighting via identification with the quotient of
$R[G][X_1]\oplus R[G][X_2]$ by the submodule $M_1\oplus M_2$.

Given two weighted projective $R[G]$-modules, say $(M,p)$ and
$(N,q)$, a map $f : (M,p) \to (N,q)$ consists of a map $f : M
\to N$ which intertwines with the projections $p$ and $q$, i.e., a map
$f$ so that
\begin{center}
\begin{tikzpicture}
\node (m1) at (0,0) {\(M\)};
\node (m2) at (0,-2) {\(M\)};
\node (n1) at (2,0) {\(N\)};
\node (n2) at (2,-2) {\(N\)};
\draw[->] (m1) -- node[auto]{$p$} (m2);
\draw[->] (n1) -- node[auto]{$q$} (n2);
\draw[->] (m1) -- node[auto]{$f$} (n1);
\draw[->] (m2) -- node[auto]{$f$} (n2);
\end{tikzpicture}
\end{center}
commutes.

By \cref{maps-are-bounded}, any morphism between finitely generated
weighted projective $R[G]$-modules is bounded.  This need not be the
case for non-finitely-generated weighted projective $R[G]$-modules.

\begin{definition}
  \label{definition:admissible}
  An epimorphism $M \twoheadrightarrow N$ is \defnword{admissible} if
  it admits a linearly bounded section; a monomorphism $f : M
  \hookrightarrow N$ is admissible if the projection $N \to \left(
    \cofiber f \right)$ is admissible.  Here the cofiber of $f$ has
  the induced weighting.
\end{definition}
Admissibility guarantees that $\mathcal{B}\Hom_{\Z[G]}(-,V)$ sends a
short exact sequence with admissible maps to a short exact sequence.
With this restricted class of monomorphisms and epimorphisms, the
larger category of not necessarily finitely generated weighted modules
over the weighted ring $R[G]$ is an exact category.

\subsubsection{Categories of Modules}

The various modules we study can be packaged together into
categories.

\begin{definition}
  \label{defn:categories-of-modules}
  We summarize the categories we will be using.
  \begin{itemize}
  \item $\FreeModules(R[G])$ and $\ProjModules(R[G])$ denote the
    categories of free and projective $R[G]$-modules, respectively, with
    $R[G]$-module maps.
  \item $\FreeModulesFingen(R[G])$ and $\ProjModulesFingen(R[G])$
    denote the categories of 
    finitely generated free and finitely generated projective
    $R[G]$-modules, respectively, with $R[G]$-module maps.
  \item $\FreeModulesWeighted(R[G])$ and $\ProjModulesWeighted(R[G])$
    denote the categories of weighted free and weighted projective
    $R[G]$-modules, respectively, with (not necessarily bounded)
    $R[G]$-module maps.
  \item $\BoundedFreeModulesWeighted(R[G])$ and
    $\BoundedProjModulesWeighted(R[G])$ denote the categories of
    weighted free and weighted projective $R[G]$-modules,
    respectively, with $\mathcal B$-bounded $R[G]$-module maps.  For
    this to form a category, the morphisms need to be composable,
    which requires that the bounding class $\mathcal B$ be composable.
  \end{itemize}
\end{definition}

Unlike the first three cases, $\BoundedFreeModulesWeighted(R[G])$ and
$\BoundedProjModulesWeighted(R[G])$ are almost never abelian
categories, even if $\mathcal B$ is the bounding class $\mathcal
B_{\mathrm{max}}$ of all non-decreasing functions. But nevertheless,
in each of these categories, there is a notion of \defnword{zero
  morphism}, so one can construct chain complexes of objects in these
categories.

\subsubsection{For rings more generally}
\label{section:weighted-rings}

The above structures can be codified by the notion of a
\defnword{weighted ring}, meaning a ring $\tilde{R}$ equipped with two
norms: an $\ell_1$ norm and a weighted $\ell_1$ norm (corresponding to
the weighted $\ell_1$ norm coming from the word length function on
$G$).

A \defnword{weighted $\tilde{R}$-module $M$} is an $\tilde{R}$-module
similarly equipped with a pair of norms $\norm{-}_1$ and $\norm{-}_w$
satisfying
$$
\norm{r \cdot m }_1 \leq \norm{r}_1 \cdot \norm{m}_1
$$
and also (as in the proof of \cref{prop:multiplication-is-bounded}),
$$
\norm{r \cdot m }_w \leq \norm{r}_w \cdot \norm{m}_1 + \norm{r}_1 \cdot \norm{m}_w.
$$
A weighted ring $\tilde{R}$ is required to be an $\tilde{R}$-module,
with respect to both left and right multiplication.

So defined, the $K$-theoretic constructions introduced in the
following sections can be extended in a natural way to the more
general class of weighted rings.  However, for the purpose of this
paper, we will assume henceforth that our weighted rings $\tilde{R}$
are of the form $R[G]$ for a normed ring $R$ and a discrete group $G$
with word length function.

\subsection{Categories of complexes}
\label{section:categories-of-complexes}

Now we consider categories of chain complexes of weighted
$R[G]$-modules; let $\cat{C}$ denote one of the aforementioned
categories with zero morphisms (e.g., $\FreeModules(R[G])$,
$\ProjModules(R[G])$, $\FreeModulesFingen(R[G])$,
$\ProjModulesFingen(R[G])$, $\FreeModulesWeighted(R[G])$,
$\ProjModulesWeighted(R[G])$, $\BoundedFreeModulesWeighted(R[G])$, or
$\BoundedProjModulesWeighted(R[G])$).  The objects of the category
$\Ch(\cat{C})$ are the \defnword{chain complexes} of objects in the
category $\cat{C}$; the differentials in the chain complex are
morphisms in $\cat{C}$, and the morphisms between objects of
$\Ch(\cat{C})$ are the \defnword{chain maps}.

There are many variants of this construction: one may impose
finiteness conditions (e.g., one can demand that the chain complexes
be finite, or merely chain homotopy equivalent to a finite complex),
and, when the objects are weighted, one may demand that certain
aspects of the chain complexes be $\mathcal B$-bounded (e.g., that the
differentials, the chain maps, or the chain homotopies be bounded).
Notation for describing combinations of these conditions is
summarized in the following definition.
\begin{definition}
  \label{defn:categories-of-complexes}
  The following are subcategories of $\Ch(\cat{C})$.
  \begin{itemize}
  \item $\Ch_{\Fin}(\cat{C})$ denotes the full subcategory of chain
    complexes which are \defnword{finite}; a chain complex $(A_\star,d)$
    is finite if $\bigoplus_{n \in \Z} A_n$ is finitely generated over $R[G]$.
  \item $\Ch_{\hFin}(\cat{C})$ denotes the full subcategory of
    \defnword{homotopically finite} chain complexes; a chain complex
    is homotopically finite if it is chain homotopy equivalent to a
    finite chain complex.
  \end{itemize}
  If $\cat{C}$ is a category with weighted objects (e.g.,
  $\FreeModulesWeighted(R[G])$, $\ProjModulesWeighted(R[G])$,
  $\BoundedFreeModulesWeighted(R[G])$, or
  $\BoundedProjModulesWeighted(R[G])$) and $\mathcal B$ is a bounding
  class, then there are ``bounded'' subcategories of
  $\Ch(\cat{C})$ worth considering.
  \begin{itemize}
  \item The categories $\BCh(\cat{C})$, $\BCh_{\Fin}(\cat{C})$,
    and $\BCh_{\hFin}(\cat{C})$ have the same objects as
    $\Ch(\cat{C})$, $\Ch_{\Fin}(\cat{C})$, and
    $\Ch_{\hFin}(\cat{C})$, respectively, but the morphisms in
    categories prefixed by $\mathcal B$ are, degreewise, $\mathcal
    B$-bounded.
  \item $\BCh_{\BhFin}(\cat{C})$ is a full subcategory of
    $\BCh_{\hFin}(\cat{C})$; the objects of
    $\BCh_{\BhFin}(\cat{C})$ are chain homotopy equivalent to a
    finite complex via a $\mathcal B$-bounded chain homotopy.
  \end{itemize}
\end{definition}

\begin{observation}
  To understand how this notation is being used, one can consider the
  difference between $\BCh(\FreeModulesWeighted(R[G])$ and
  $\Ch(\BoundedFreeModulesWeighted(R[G])$. In the former category, the
  modules are not necessarily finitely generated, the chain complexes
  may have differentials which are not $\mathcal B$-bounded, but the
  chain maps are $\mathcal B$-bounded.  In the latter category, the
  differentials, being maps in $\BoundedFreeModulesWeighted(R[G])$ are
  $\mathcal B$-bounded, but the chain maps need not be $\mathcal
  B$-bounded.
\end{observation}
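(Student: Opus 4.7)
The plan is to unpack the nested categorical notation in each of the two examples and read off, from \cref{defn:categories-of-modules,defn:categories-of-complexes}, which pieces of data are constrained to be $\mathcal B$-bounded and which are not. I anticipate no genuine obstacle; this is a bookkeeping exercise meant to clarify the notation rather than a substantive assertion, so the ``proof'' is really a side-by-side parsing of the two expressions.

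First I would fix an object $(A_\star,d_\star)$ in $\BCh(\FreeModulesWeighted(R[G]))$. By \cref{defn:categories-of-complexes}, the objects of $\BCh(\cat{C})$ agree with those of $\Ch(\cat{C})$, so $(A_\star,d_\star)$ is simply a chain complex in $\FreeModulesWeighted(R[G])$. Each $A_n$ is thus a weighted free $R[G]$-module with no finite-generation hypothesis (since the plain, subscript-free $\FreeModulesWeighted$ is used), and each differential $d_n$ is a morphism in $\FreeModulesWeighted(R[G])$, i.e., an $R[G]$-module map that by \cref{defn:categories-of-modules} is \emph{not} required to be $\mathcal B$-bounded. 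The $\mathcal B$-prefix on $\BCh$ then restricts only the chain maps, which by \cref{defn:categories-of-complexes} must be degreewise $\mathcal B$-bounded.

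Next I would examine an object of $\Ch(\BoundedFreeModulesWeighted(R[G]))$. Here the differentials are morphisms in $\BoundedFreeModulesWeighted(R[G])$ and hence, by \cref{defn:categories-of-modules}, $\mathcal B$-bounded by definition. Since the outer $\Ch$ carries no $\mathcal B$-prefix, however, chain maps between such complexes need only commute with the differentials; no degreewise boundedness is imposed. This is precisely the opposite allocation of constraints from the first example, which is the content of the observation.

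The one mild subtlety worth flagging is that $\BoundedFreeModulesWeighted(R[G])$ forms a category only when $\mathcal B$ is composable, so that the composition of two $\mathcal B$-bounded morphisms is again $\mathcal B$-bounded. This hypothesis is already recorded in \cref{defn:categories-of-modules}, and the second example implicitly presumes it; no further argument is required.
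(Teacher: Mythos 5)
Your parsing is correct and matches the paper's intent exactly; the observation has no separate proof in the text, since it is itself the explanatory unpacking of Definitions \ref{defn:categories-of-modules} and \ref{defn:categories-of-complexes}, and your side-by-side reading reproduces that unpacking faithfully, including the correct allocation of the $\mathcal B$-bound to chain maps in the first case and to differentials in the second. Flagging the composability hypothesis needed for $\BoundedFreeModulesWeighted(R[G])$ to be a category is a sensible extra remark, consistent with the caveat already recorded in Definition \ref{defn:categories-of-modules}.
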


There are some obvious relationships between the above categories.
\begin{observation}
  \label{equivalences-between-categories}
  Applying \cref{maps-are-bounded}, the forgetful functor
  $$\BCh_{\Fin}(\BoundedProjModulesWeighted(R[G])) \to \Ch_{\Fin}(\ProjModulesWeighted(R[G]))$$
  is an isomorphism of categories, as every chain map in
  $\Ch_{\Fin}\ProjModulesWeighted(R[G]))$ is bounded (notice the
  subscript ``$\Fin$'' forces the chain complexes to be, degreewise,
  finitely generated).

  If $X$ is a finite set, any two different weight functions $w_1$ and
  $w_2$ on $X$ produce weighted $R[G]$-modules $R[G][(X,w_1)]$ and
  $R[G][(X,w_2)]$ which are, via the identity on $X$, canonically
  $\mathcal B$-boundedly isomorphic.  Consequently, the forgetful
  functor
  $$\Ch_{\Fin}(\ProjModulesWeighted(R[G])) \to \Ch_{\Fin}(\ProjModules(R[G]))$$
  is an equivalence of categories.

  Without the finiteness condition, $\Ch(\ProjModulesWeighted(R[G]))$ and
  $\Ch(\ProjModules(R[G]))$ are not equivalent categories.
\end{observation}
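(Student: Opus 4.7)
The three assertions in the observation are successive applications of \cref{maps-are-bounded} in weaker and weaker finiteness regimes, culminating in a regime where the proposition fails.

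\textbf{First isomorphism.} An object of $\BCh_{\Fin}(\BoundedProjModulesWeighted(R[G]))$ and an object of $\Ch_{\Fin}(\ProjModulesWeighted(R[G]))$ both consist of a finite chain complex of weighted projective $R[G]$-modules; the only formal difference is that the former demands $\mathcal B$-bounded differentials. Finite generation forces each module $A_n$ to be finitely generated over $R[G]$, so by \cref{maps-are-bounded} every $R[G]$-linear map between the $A_n$'s is automatically $\mathcal B$-bounded. Hence the differentials in $\Ch_{\Fin}(\ProjModulesWeighted(R[G]))$ are already $\mathcal B$-bounded and the two classes of objects coincide. The same reasoning applies degreewise to chain maps, so the forgetful functor is bijective on both objects and morphisms, giving an isomorphism of categories.

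\textbf{Second equivalence.} The forgetful functor $\Ch_{\Fin}(\ProjModulesWeighted(R[G])) \to \Ch_{\Fin}(\ProjModules(R[G]))$ discards the weight function; because morphisms in both categories consist degreewise of arbitrary $R[G]$-module maps (no boundedness condition), it is fully faithful. For essential surjectivity, I would construct a quasi-inverse: given a finite chain complex of finitely generated projective $R[G]$-modules, realize each term $A_n$ as a summand of $R[G][X_n]$ for some finite set $X_n$, assign an arbitrary weight function to $X_n$, and invoke \cref{maps-are-bounded} to conclude that the projection and its section are $\mathcal L$-bounded. Differentials lift from $\ProjModules$ to $\ProjModulesWeighted$ via the chosen inclusions and projections, and the independence-of-weight statement already recorded in the observation (any two weightings on a finite set produce canonically $\mathcal B$-boundedly isomorphic modules) shows that this lift is well-defined up to natural isomorphism, giving a quasi-inverse to the forgetful functor.

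\textbf{Non-equivalence in the infinite case.} The main obstacle is to produce a structural difference between $\Ch(\ProjModulesWeighted(R[G]))$ and $\Ch(\ProjModules(R[G]))$ once infinite chain complexes and infinitely generated modules are admitted. The argument of the previous paragraph breaks down here because the $\mathcal L$-boundedness of the projection $p$ in the definition of a weighted projective module is a genuine constraint outside the finitely generated world: not every projective $R[G]$-module arises as $pM$ for some weighted free module $M$ with an $\mathcal L$-bounded idempotent $p$. My plan is to exhibit an infinitely generated projective $R[G]$-module $P$ together with an explicit $R[G]$-linear idempotent $p$ on a free module $R[G][S]$ such that, for any weight function $w_S$ on $S$, the ratio $\norm{p(s)}_{\mathrm{id}}/w_S(s)$ is unbounded as $s$ ranges over $S$; a candidate is an idempotent whose values on basis elements spread across unboundedly many other basis elements in a way that cannot be tamed by any re-weighting. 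Such a $P$ then lies outside the essential image of the forgetful functor; since that functor is also fully faithful, no equivalence between the two categories can exist. I expect the delicate step to be verifying the non-realizability claim uniformly over all weight functions, most likely by a diagonal argument that ranges over the countable data defining any possible weighting.
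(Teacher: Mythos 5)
Your treatment of the first two claims is correct and is essentially the argument the paper has in mind: for the first, the finiteness hypothesis together with \cref{maps-are-bounded} collapses the distinction between bounded and arbitrary differentials and between bounded and arbitrary chain maps, giving an honest isomorphism of categories; for the second, the point is precisely that for a finite indexing set all weightings give canonically (boundedly) isomorphic modules and (again by \cref{maps-are-bounded}) the idempotent $p$ and its section are automatically $\mathcal L$-bounded, so the quasi-inverse you sketch is well defined up to natural isomorphism.

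For the third claim your proposed argument has two genuine gaps. First, the logical step ``the forgetful functor is fully faithful and not essentially surjective, hence the categories are not equivalent'' is not valid: a fully faithful, non-essentially-surjective functor can exist between equivalent categories (e.g.\ the inclusion of a proper but essentially wide--after adding isomorphic copies--full subcategory, or more simply any fully faithful endofunctor of a large category that misses some isomorphism classes while the identity is also available). To conclude non-equivalence one must produce an invariant separating the two categories, or otherwise rule out \emph{all} equivalences, not just the forgetful one. Second, even the non-realizability claim is more delicate than your sketch suggests: an object of $\ProjModulesWeighted(R[G])$ is a pair $(R[G][S],p)$, and you are free to choose both $S$ \emph{and} the weighting $w_S$ \emph{and} a different idempotent $p'$ with $\Image(p') \cong P$, so fixing one idempotent $p$ and quantifying only over reweightings of $S$ is not sufficient. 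In fact naively spread-out idempotents can often be tamed: if $p(e_n) = c_n e_0$ for $n\geq 1$ then the reweighting $w_S(n) = \abs{c_n}$ already makes $p$ bounded on basis elements, so your ``candidate'' needs to be considerably more carefully engineered, and the argument needs to account for changing the free presentation entirely, not just the weights. To be fair, the paper also records this third claim without proof; but as written your plan would not close the argument.
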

These above observations apply for free modules in place of projective
modules.

In the subcategories of $\BCh(\cat{C})$, two objects can be chain
homotopy equivalent in two different ways: there is the usual
(``coarse'') notion of chain equivalence, and the finer relation of
$\mathcal B$-bounded chain equivalence.  Waldhausen's setup of a
category with cofibrations and weak equivalences axiomatizes the
comparison of equivalences on a category; we review his setup now.

\section{Waldhausen $K$-theory of $\mathcal B$-bounded chain complexes}
\label{section:k-theory}

\subsection{Recap of Waldhausen $K$-theory}

\subsubsection{Waldhausen categories}

\newcommand{\ZeroObject}{\ast}
\newcommand{\cofibration}{\hookrightarrow}

A \defnword{Waldhausen category} (see \cite{MR802796}) involves two
distinguished classes of morphisms: cofibrations and weak
equivalences.  After the definition, we explain why these
distinguished classes exist in the categories discussed in
\cref{section:categories-of-complexes}.

\begin{definition}
  A \defnword{category with cofibrations} means a category $\cat{C}$,
  equipped with a zero object $\ZeroObject$ (both initial and
  terminal), together with a subcategory $\co\cat{C}$, the morphisms
  of which are called \defnword{cofibrations}, and are denoted by
  hooked arrows $\cofibration$.  The subcategory $\co\cat{C}$ is
  \defnword{wide}, meaning that the every object of $\cat{C}$ is an
  object of $\co\cat{C}$, but not every morphism is a cofibration.

The subcategory of cofibrations satisfies the following three
properties.
\begin{enumerate}[label=(Cof \arabic*),leftmargin=*]
\item\label{Cof1} Every isomorphism in $\cat{C}$ is in $\co\cat{C}$; in short,
  $\co\cat{C}$ is replete.
\item\label{Cof2} For every object $X$ in $\cat{C}$, the map $\ZeroObject \to X$ is in
$\co\cat{C}$.
\item\label{Cof3} \label{cobase-change}Cofibrations are preserved under co-base
  change, meaning that for any cofibration $i : X \cofibration Y$ and
  any morphism $f : X \to Z$ in $\cat{C}$, there is a pushout square
\begin{center}
\begin{tikzpicture}
\node (x) at (0,0) {\(X\)};
\node (y) at (0,-2) {\(Y\)};
\node (z) at (2,0) {\(Z\)};
\node (w) at (2,-2) {\(W\)};
\draw[right hook->] (x) -- node[auto]{$i$} (y);
\draw[->] (x) -- node[auto]{$f$} (z);
\draw[->] (y) -- (w);
\draw[right hook->] (z) -- node[auto]{$j$} (w);
\end{tikzpicture}
\end{center}
and the map $j : Z \cofibration W$ is a cofibration.
\end{enumerate} 
\end{definition}

Let $\cat{C}$ be one of the categories of modules listed above in
\cref{defn:categories-of-modules}.  In $\Ch(\cat{C})$, in the
unweighted setting, a cofibration is a degreewise monomorphism of
chain complexes which is degreewise split.

If $\cat{C}$ is a category with weighted objects and $\mathcal
B$-bounded maps, a chain map $f : C_\star \to D_\star$ of weighted
complexes in $\Ch(\cat{C})$ is a cofibration which is degreewise an
admissible monomorphism, meaning that there is a choice of cofiber
$E_\star = D_\star / C_\star$ so that for all $n \in \Z$ the map $D_n
\to E_n$ is an admissible epimorphism.  This yields a splitting
degreewise, but not necessarily a splitting on the level of chain
complexes.

\begin{lemma}
  \label{lemma:cofibration-structure}
  Let $\cat{C}$ be one of the categories of chain complexes listed in
  \cref{defn:categories-of-complexes}; using the preceding definition
  of a subcategory $\co\cat{C}$ of cofibrations, axioms \ref{Cof1},
  \ref{Cof2}, and \ref{Cof3} hold.
\end{lemma}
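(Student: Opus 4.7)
The plan is to verify the three axioms in turn; (Cof 1) and (Cof 2) are essentially formal, while (Cof 3) carries all of the content.

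For (Cof 1), an isomorphism $f : C_\star \to D_\star$ is degreewise an isomorphism, so its cofiber is the zero complex; the projection onto it admits the (trivially $\mathcal L$-bounded) zero section. In the $\mathcal B$-bounded subcategories there is nothing extra to check, since an isomorphism is in particular $\mathcal B$-bounded. For (Cof 2), the map $0 \to X_\star$ has cofiber $X_\star$ and the identity is its own $\mathcal L$-bounded section.

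For (Cof 3), the plan is to construct the pushout $W_\star$ degreewise and then check (a) that the resulting inclusion $j : Z_\star \to W_\star$ is degreewise an admissible monomorphism, and (b) that $W_\star$ actually lies in the subcategory under consideration. Using admissibility of $i : X_\star \cofibration Y_\star$, fix for each $n$ a cofiber $E_n := \cofiber(i_n)$ with an $\mathcal L$-bounded section $s_n$ of the projection $\pi_n : Y_n \twoheadrightarrow E_n$; this gives an $\mathcal L$-bounded identification $Y_n \cong i_n(X_n) \oplus s_n(E_n)$ under which $i_n$ is the first-factor inclusion. Set $W_n := Z_n \oplus E_n$ and let $j_n : Z_n \to W_n$ be the first-factor inclusion. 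The universal property of the pushout then forces the remaining structure: $p_n : Y_n \to W_n$ is $(x,e)\mapsto (f_n(x),e)$, and the differential on $W_\star$ assembles from $d^Z$, $d^Y$, $f$, $\pi$, and $s$. The map $j_\star$ is a cofibration since its cofiber is again $E_\star$ with the same section $s_\star$, now taking values in the second summand of $W_\star$.

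The main obstacle is showing that $W_\star$ inhabits the same subcategory as the input data. In the unweighted, non-finiteness cases this is automatic from the direct-sum decomposition, because free and projective modules are closed under direct sums. In the $\mathcal B$-bounded subcategories, one must verify that the induced differential on $W_\star$ is $\mathcal B$-bounded; this follows from $\mathcal B$-boundedness of the constituent maps together with the closure of $\mathcal B$ under positive rational linear combinations and composition with $\mathcal L$. The finiteness cases require a further argument: in $\Ch_{\Fin}$ and $\BCh_{\Fin}$, finiteness of $W_\star$ is immediate from $W_n \cong Z_n \oplus E_n$ (since $E_n$ is a quotient of a finitely generated module). For $\Ch_{\hFin}$, $\BCh_{\hFin}$, and $\BCh_{\BhFin}$, the pushout $W_\star$ is naturally chain homotopy equivalent to the mapping cone of $(i,-f) : X_\star \to Y_\star \oplus Z_\star$; by naturality of the mapping cone, (boundedly) chain homotopy equivalences of $X_\star, Y_\star, Z_\star$ with finite complexes assemble into a (boundedly) chain homotopy equivalence of $W_\star$ with the cone of the corresponding map between the finite replacements. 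This is the only step where composability of the bounding class is genuinely used, and it is the one piece of the argument worth writing in detail.
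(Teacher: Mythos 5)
Your argument is correct and its core insight is the same as the paper's: since the top square is a pushout, the cofiber of $j$ can be identified with the cofiber of $i$, so the admissible splitting for $i$ transports along the diagram to produce an admissible splitting for $j$. The paper constructs $W$ as the quotient $(Z \oplus Y)/\!\sim$ with the induced weighting and chases the section through a diagram; you instead use the split structure of $i$ to write $W_n \cong Z_n \oplus E_n$ explicitly, which is the same object in slightly more concrete form. Either presentation is fine.

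Where you go beyond the paper is in flagging that $W_\star$ must actually live in the subcategory under consideration, and this is worth spelling out. For $\Ch_{\Fin}$ and $\BCh_{\Fin}$ your observation that $W_n \cong Z_n \oplus E_n$ settles it immediately. For the homotopically-finite variants, your route through the cone of $(i,-f):X_\star \to Y_\star \oplus Z_\star$ works; a slightly shorter version of the same idea is to note that $W$ sits in an admissible short exact sequence $X_\star \hookrightarrow Y_\star \oplus Z_\star \twoheadrightarrow W_\star$, so $W_\star$ is ($\mathcal B$-boundedly) equivalent to the cofiber of a map between ($\mathcal B$-boundedly) homotopically finite complexes and hence is itself in the subcategory. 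One small correction: when you argue that the twisted differential on $W_\star$ is $\mathcal B$-bounded, the off-diagonal block is of the form $f \circ (\pi \circ d^Y \circ s)$, a composite involving two $\mathcal B$-bounded maps; this requires composability of $\mathcal B$ itself, not merely closure under composition with $\mathcal L$. This is not an error in substance, since the ambient category $\BoundedProjModulesWeighted(R[G])$ already requires $\mathcal B$ composable for morphisms to compose, but the attribution should be corrected.
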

\begin{proof}
  For the classical case in which the chain maps are not weighted, the
  proof is standard.  When the objects are weighted and the morphisms
  are $\mathcal B$-bounded, properties \ref{Cof1} and \ref{Cof2} are
  again clear; the remaining issue is \ref{Cof3}.  The fact that it is
  a cofibration diagram comes one gets for free, moreover, if $f$ and
  $i$ are bounded, then $j$ bounded where $W$ has the induced
  weighting $(Z \oplus Y)/\sim$.  However, we need to know that $j$ is
  a cofibration, i.e., an admissible monomorphism.  Consider the
  diagram of weighted chain complexes
\begin{center}
\begin{tikzpicture}
\node (x) at (0,0) {\(X_\star\)};
\node (y) at (0,-2) {\(Y_\star\)};
\node (z) at (2,0) {\(Z_\star\)};
\node (w) at (2,-2) {\(W_\star\)};
\node (u1) at (0,-4) {\(U_\star\)};
\node (u2) at (2,-4) {\(U_\star\)};
\draw[right hook->] (x) -- node[auto]{$i$} (y);
\draw[->] (x) -- node[auto]{$f$} (z);
\draw[->] (y) -- (w);
\draw[right hook->] (z) -- node[auto]{$j$} (w);
\draw[->] (y) -- (u1);
\draw[->] (w) -- (u2);
\draw[double,double distance=2pt] (u1) -- (u2);
\draw[->] (u1) edge[out=180,in=180] node[left]{$i'$} (y);
\draw[->] (u2) edge[out=0,in=0] node[right]{$j'$} (w);
\end{tikzpicture}
\end{center}
Since $i$ is a cofibration, it is an admissible monomorphism, so there
is a degree-preserving section $i'$ of graded modules from its cofiber
$U$.  The top square is a pushout, so the cofiber of $i$ is the same
as the cofiber of $j$, and we get the required section $j'$ of graded
modules by following the diagram; thus, $j$ is an \textit{admissible}
monomorphism.
\end{proof}

\begin{definition}
  Given a category $\cat{C}$ with a subcategory $\co\cat{C}$ of
  cofibrations, a \defnword{category of weak equivalences} for
  $\cat{C}$ is a subcategory $\w\cat{C}$ which satisfies two
  properties.  
\begin{enumerate}[label=(Weq \arabic*),leftmargin=*]
\item \label{Weq1}Every isomorphism in $\cat{C}$ is in $\w\cat{C}$.
\item \label{Weq2}Weak equivalences can be glued together, meaning that if
\begin{center}
\begin{tikzpicture}
\node (b) at (0,0) {\(B\)};
\node (a) at (2,0) {\(A\)};
\node (c) at (4,0) {\(C\)};
\node (b') at (0,-2) {\(B'\)};
\node (a') at (2,-2) {\(A'\)};
\node (c') at (4,-2) {\(C'\)};
\draw[right hook->] (a) -- (b);
\draw[right hook->] (a') -- (b');
\draw[->] (a) -- (c);
\draw[->] (a') -- (c');
\draw[->] (b) -- node[auto]{$\simeq$} (b');
\draw[->] (a) -- node[auto]{$\simeq$} (a');
\draw[->] (c) -- node[auto]{$\simeq$} (c');
\end{tikzpicture}
\end{center}
where the arrows decorated with $\simeq$ are in $\w\cat{C}$, then the
induced map between pushouts $B \cup_A C \to B' \cup_A C'$ is also in
$\w\cat{C}$.
\end{enumerate}
\end{definition}
Again, the subcategory $\w\cat{C}$ is wide, meaning that every object
in $\cat{C}$ is in the subcategory of weak equivalences.

Let $\cat{C}$ be one of the categories of modules listed in
\cref{defn:categories-of-modules}.  Consider the subcategory
$\h\Ch(\cat{C})$ which has the same objects as $\Ch(\cat{C})$ but
whose morphisms are chain homotopy equivalences; doing so endows
$\Ch(\cat{C})$ with the structure of a category with weak
equivalences.  This is the classical case. In the presence of weighted
objects and a bounding class $\mathcal B$, there is a finer notion of
$\mathcal B$-bounded chain homotopy equivalence, denoted $\Bh$.  To
say a chain map $F : C_\star \to D_\star$ is a $\mathcal B$-bounded
chain homotopy equivalence means that there is a $\mathcal B$-bounded
chain homotopy inverse $G : D_\star \to C_\star$ so that $F \circ G$
and $G \circ F$ are $\mathcal B$-boundedly homotopic to the identity,
i.e., the chain homotopy is a $\mathcal B$-bounded map.

To summarize, there are three increasingly restrictive ways one can
introduce weak equivalences; given a category $\cat{C}$ of weighted
objects:
\begin{itemize}
\item $\h\Ch(\cat{C})$, in which weak equivalences are chain homotopy
  equivalences, and the weights are simply ignored;
\item $\h\BCh(\cat{C})$, in which weak equivalences are again possibly
  unbounded chain homotopy equivalences, but the chain maps are
  $\mathcal B$-bounded;
\item $\Bh\BCh(\cat{C})$, in which the weak equivalences are $\mathcal
  B$-bounded chain maps, for which the homotopies to the identity are
  also $\mathcal B$-bounded.
\end{itemize}

\begin{lemma}
  The axioms~\ref{Weq1} and~\ref{Weq2} are satisfied in the
  aforementioned categories.
\end{lemma}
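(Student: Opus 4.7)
Axiom \ref{Weq1} holds automatically in all three cases: any isomorphism $f : C_\star \to D_\star$ is its own chain homotopy inverse with the zero map as the chain homotopy, and the zero map is trivially $\mathcal{B}$-bounded, so $f$ lies in $\h\Ch(\cat{C})$, $\h\BCh(\cat{C})$, and $\Bh\BCh(\cat{C})$ alike.

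For \ref{Weq2}, my plan is to argue via the standard decomposition of a pushout along a cofibration. In the unweighted case $\h\Ch(\cat{C})$, this is the classical gluing lemma: since cofibrations are degreewise split, the pushout $B \cup_A C$ admits an identification, as a graded module, with $C \oplus (B/A)$, under which the induced map to $C' \oplus (B'/A')$ is block-triangular with diagonal blocks the given weak equivalence $C \to C'$ and the induced map $B/A \to B'/A'$ on cofibers. Both are chain homotopy equivalences---the latter by the two-out-of-three property applied to the cofibration sequence together with the given equivalences $A \to A'$ and $B \to B'$---and hence so is the glued map.

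In the weighted setting the same graded-module decomposition is available, but now the splittings are $\mathcal{L}$-bounded by admissibility of the monomorphism $A \hookrightarrow B$, and the pushout inherits the induced weighting as in \cref{section:proj-modules}. Consequently the identifications $B \cup_A C \cong C \oplus (B/A)$ and $B' \cup_{A'} C' \cong C' \oplus (B'/A')$ are $\mathcal{L}$-bounded, and the induced map between pushouts is $\mathcal{B}$-bounded. For $\h\BCh(\cat{C})$ nothing further is needed, since weak equivalences there are just $\mathcal{B}$-bounded chain maps which happen to be ordinary chain homotopy equivalences, and the previous paragraph already supplies such a homotopy inverse. For $\Bh\BCh(\cat{C})$ one must verify in addition that the chain homotopy inverse on $B \cup_A C$ and the two chain homotopies witnessing the equivalence are themselves $\mathcal{B}$-bounded; these are assembled from the given $\mathcal{B}$-bounded inverses and homotopies on $A$, $B$, $C$ together with the $\mathcal{L}$-bounded splittings coming from admissibility, and so they remain in $\mathcal{B}$ by the weak closure of $\mathcal{B}$ under positive rational combinations and under composition with $\mathcal{L}$.

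The main technical obstacle is the bookkeeping in the $\Bh\BCh(\cat{C})$ case: one must write out the inverse map and the two chain homotopies on the pushout explicitly in terms of the data on $A$, $B$, $C$ and confirm, term by term, that each piece stays in $\mathcal{B}$. This is routine once the graded splitting $B \cup_A C \cong C \oplus (B/A)$ is in hand, but demands care to ensure that no intermediate expression escapes $\mathcal{B}$---the key point being that pre- or post-composing a $\mathcal{B}$-bounded map with an $\mathcal{L}$-bounded splitting produces another $\mathcal{B}$-bounded map, by the axioms on a bounding class.
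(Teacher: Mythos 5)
Your proof is correct and its overall structure parallels the paper's: Weq1 is immediate, and for Weq2 both arguments begin with the admissible splittings and the induced weighting on the pushouts, then reduce to a chain-level fact about bounded equivalences. The difference is in how that reduction is packaged. The paper's proof states that it suffices to verify a single technical claim (\cref{boundedly-acyclic}): in an admissible short exact sequence $A_\star \hookrightarrow B_\star \twoheadrightarrow C_\star$ with $A_\star$ $\mathcal B$-boundedly contractible, the quotient map is a $\mathcal B$-bounded equivalence; the derivation of Weq2 from this claim, and the claim's own proof, are both left to the reader. You instead make the reduction explicit by identifying $B\cup_A C$ with $C\oplus (B/A)$ as a graded module, observing that the induced map of pushouts is block upper-triangular with diagonal blocks $C\to C'$ and $B/A\to B'/A'$, and arguing that a bounded chain map of this form is a bounded equivalence once its diagonal blocks are. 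This is a more transparent way to organize the same gluing-lemma argument, and it helpfully fills in the step the paper leaves implicit.

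Two minor points worth flagging. First, what you call ``two-out-of-three applied to the cofibration sequence'' is not the model-categorical two-out-of-three for compositions; it is the statement that in a map of admissible short exact sequences, if two of the three vertical maps are weak equivalences then so is the third. In the $\mathcal B$-bounded setting this is not a free axiom---it is precisely the content of the paper's \cref{boundedly-acyclic} (applied to the short exact sequence of mapping cones), so your argument and the paper's ultimately rest on the same unproved chain-level fact. Second, the block upper-triangular structure does hold, but it deserves a line of justification: the vanishing of the $(2,1)$ entry comes from commutativity of the square over $A\to A'$, and the off-diagonal $(1,2)$ entry is the composite of a projection onto $A'$ (furnished by the admissible splitting of $A'\hookrightarrow B'$) with $A'\to C'$; both stay within $\mathcal B$ because admissible splittings are $\mathcal L$-bounded and $\mathcal B\succeq\mathcal L$, which is the point you make at the end.
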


\begin{proof}
  In the unweighted cases $\h\Ch(\cat{C})$ and $\h\BCh(\cat{C})$, this
  result is classical.  In the weighted case, \ref{Weq1} is satisfied
  because a $\mathcal B$-bounded isomorphism is, after forgetting the
  weights, an isomorphism.

  To verify axiom \ref{Weq2}, we note that the weighting as defined on
  $\left(B \oplus C/\sim\right)$ resp. $\left(B' \oplus
    C'/\sim\right)$ produce a suitably bounded map of pushouts
  $\left(B \oplus C/\sim\right) \to \left(B' \oplus C'/\sim\right)$.

  To see that this map is a $\mathcal B$-bounded chain homotopy
  equivalence, it suffices to verify the following technical fact:

  \begin{claim}
    \label{boundedly-acyclic}
    Given an admissible short exact sequence of weighted chain complexes
    $$A_\star \hookrightarrow B_\star \twoheadrightarrow C_\star$$
    with $A_\star \simeq \ZeroObject$ via a $\mathcal B$-bounded chain
    homotopy, the admissible epimorphism $B_\star
    \twoheadrightarrow C_\star$ is a $\mathcal B$-bounded chain
    homotopy equivalence.
  \end{claim}
This can be verified directly using exactly the same type of argument
as one uses in the unbounded case---the argument is left to the reader.
\end{proof}

\subsubsection{$K$-theory of a Waldhausen category}

We recall Waldhausen's $\Sdot$ construction.  The poset of integers
$[n] = \{ 0, 1, \ldots, n \}$ can be regarded as a category; the
category $\Ar[n]$ is the category of arrows in $[n]$.  Given a
category $\cat{C}$ with cofibrations $\co\cat{C}$, define $\Sn
\cat{C}$ to be the category of functors $A : \Ar[n] \to \cat{C}$, with
two properties.
\begin{enumerate}[label=(S\arabic*),leftmargin=*]
\item $A(j \to j) = \ZeroObject$
\item For a pair of composable arrows $i \to j$ and $j \to k$ in
  $\Ar[n]$, the map
$$
A(i \to j) \longrightarrow A(i \to k)
$$
is a cofibration, and the diagram
\begin{center}
\begin{tikzpicture}
\node (x) at (0,0) {\(A(i \to j)\)};
\node (y) at (0,-2) {\(A(j \to j) = \ZeroObject\)};
\node (z) at (4,0) {\(A(i \to k)\)};
\node (w) at (4,-2) {\(A(j \to k)\)};
\draw[right hook->] (x) -- node[auto]{} (y);
\draw[->] (x) -- node[auto]{$f$} (z);
\draw[->] (y) -- (w);
\draw[right hook->] (z) -- node[auto]{} (w);
\end{tikzpicture}
\end{center}
\end{enumerate}
The morphisms in the category $\Sn\cat{C}$ are the natural
transformations between such functors.  By collecting together (for
varying $n$) the categories $\Sn\cat{C}$, we form a simplicial
category $\Sdot\cat{C}$.

Canonically, $\Sn\cat{C}$ can be given the structure of a Waldhausen
category.  In particular, given a subcategory of weak equivalences
$\w\cat{C}$, the category $\Sn\cat{C}$ also has a subcategory of weak
equivalences $\wSn\cat{C}$; a natural transformation $A \to A'$ is a
weak equivalence if it is a weak equivalence objectwise. In this way,
one may form the basepointed simplicial space 
\[
\wSdot\cat(C) := \left\{ [n] \mapsto \Realization{\wSn\cat{C}} \right\}_{n\ge 0}
\]

The \defnword{Waldhausen $K$-theory space $\K(\cat{C})$} of $\cat{C}$
is defined to be $\Loops\Realization{\wSdot\cat{C}}$, which admits a
canonical delooping; we denote the associated spectrum by
$\KK(\cat{C})$.  The homotopy groups of
$\Loops\Realization{\wSdot\cat{C}}$ are the higher $K$-groups of the
Waldhausen category $\cat{C}$.

\subsubsection{Approximation theorem}

Among the tools developed by Waldhausen in \cite{MR802796} to study
his eponymous categories is his Approximation Theorem; stating this
powerful theorem, however, requires introducing some additional
properties that an arbitrary Waldhausen category may or may not
satisfy: these are the Saturation Axiom, and the Cylinder Axiom.

\newtheorem*{saturation-axiom}{Saturation Axiom}
\begin{saturation-axiom}
  If $f,g$ are composable maps in $\cat{C}$, and two of the three maps
  $f$, $g$, and $g \circ f$ are in $\w\cat{C}$, then the third is as well.
\end{saturation-axiom}

\begin{lemma}
  \label{saturation-lemma}
  The categories with weak equivalences,
  $\h\Ch_{\Fin}(\cat{C})$, $\h\Ch_{\hFin}(\cat{C})$, $\Bh\BCh_{\Fin}(\cat{C})$, $\Bh\BCh_{\BhFin}(\cat{C})$, satisfy the Saturation Axiom.
\end{lemma}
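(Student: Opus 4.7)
The first two cases, $\h\Ch_{\Fin}(\cat{C})$ and $\h\Ch_{\hFin}(\cat{C})$, are classical: the fact that chain homotopy equivalences satisfy the 2-out-of-3 property is standard, and the property is inherited by any full subcategory. The content therefore lies in the bounded cases, where the plan is to verify 2-out-of-3 directly by constructing candidate inverses and tracking $\mathcal{B}$-boundedness at every step. Throughout, the definitional data of a $\mathcal{B}$-bounded chain homotopy equivalence $f : A_\star \to B_\star$ consists of a $\mathcal{B}$-bounded chain map $f' : B_\star \to A_\star$ together with $\mathcal{B}$-bounded chain homotopies witnessing $ff' \simeq \id$ and $f'f \simeq \id$.

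Fix composable maps $f : A_\star \to B_\star$ and $g : B_\star \to C_\star$ in $\Bh\BCh(\cat{C})$. If both $f$ and $g$ are in $\Bh$, with inverses $f', g'$, then $f' g'$ is the candidate inverse for $gf$; the homotopies $(gf)(f'g') \simeq \id$ and $(f'g')(gf) \simeq \id$ are obtained by pre- and post-composing the homotopies $ff' \simeq \id_B$ and $g'g \simeq \id_B$ by the bounded chain maps $g, g', f, f'$. If $f$ and $gf$ are in $\Bh$, with inverses $f'$ and $(gf)'$, then the candidate inverse of $g$ is $f(gf)'$: one side gives $g \cdot f(gf)' = (gf)(gf)' \simeq \id$ directly, and on the other side
\[
f(gf)' g \;\simeq\; f(gf)' g f f' \;=\; f(gf)'(gf)f' \;\simeq\; f f' \;\simeq\; \id,
\]
each intermediate homotopy being obtained from a given $\mathcal{B}$-bounded homotopy by pre- and post-composition with bounded chain maps. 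The remaining case, where $g$ and $gf$ are in $\Bh$, is symmetric, with $(gf)' g$ as the candidate inverse for $f$.

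The main technical point to be checked is that every homotopy produced this way is itself $\mathcal{B}$-bounded; this uses only that $\mathcal{B}$ is composable (already required to define $\BCh$) and that sums and compositions of bounded maps remain bounded, so in the end the obstacle is bookkeeping rather than a genuine difficulty. Finally, since $\Bh\BCh_{\BhFin}(\cat{C})$ is a \emph{full} subcategory of $\Bh\BCh(\cat{C})$, the weak equivalences are inherited and the general 2-out-of-3 verification transfers without change; in $\Bh\BCh_{\Fin}(\cat{C})$ the same argument applies, with the additional convenience that \cref{maps-are-bounded} makes every module map between the objects automatically bounded.
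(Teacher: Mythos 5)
Your argument is correct, but it takes a genuinely different route from the paper's. You verify 2-out-of-3 directly by constructing candidate homotopy inverses (e.g., $f(gf)'$ as inverse for $g$ when $f$ and $gf$ are weak equivalences) and tracking $\mathcal{B}$-boundedness of every intermediate homotopy using composability of $\mathcal{B}$, closure under sums, and the fact that pre/post-composition of a bounded homotopy with bounded chain maps stays bounded. The paper instead runs a mapping-cone argument: it builds an admissible short exact sequence
\[
\Cone(f)_\star \hookrightarrow \widetilde{\Cone}(g\circ f)_\star \twoheadrightarrow \Cone(g)_\star
\]
from cylinder pushouts, observes that the first two terms are $\mathcal{B}$-boundedly contractible when $f$ and $g\circ f$ are weak equivalences, and then invokes \cref{boundedly-acyclic} (the claim that an admissible epimorphism out of a complex with boundedly acyclic kernel is a bounded chain homotopy equivalence) to conclude $\Cone(g)_\star \simeq \ZeroObject$, hence $g$ is a weak equivalence. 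Your approach is more elementary and self-contained, requiring no auxiliary lemma beyond the algebra of bounded homotopies; the paper's approach is more structural and recycles the cone/cylinder machinery and \cref{boundedly-acyclic}, which are used elsewhere in the paper (notably in the verification of the Cylinder Axiom and the Approximation Theorem hypotheses). Both correctly reduce the $\Fin$ case to the classical one via \cref{maps-are-bounded}, and your observation that the 2-out-of-3 argument restricts to the full subcategories $\BCh_{\Fin}$ and $\BCh_{\BhFin}$ without change is accurate.
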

\begin{proof}
  If $f$ and $g$ are weak equivalences, then clearly so is $g \circ f$ in any
  of these categories.

  Suppose $f : A_\star \to B_\star$ and $g : B_\star \to
  C_\star$ are composable maps, and that $f$ and $g \circ f$ are weak
  equivalences.

  By $\widetilde{\Cone}(g \circ f)_\star$ and $\widetilde{\Cyl}(g
  \circ f)_\star$ we mean the pushouts
  \begin{center}
    \begin{tikzpicture}
      \node (00) at (0,0) {\(B_\star\)};
      \node (20) at (3,0) {\(\Cone(f)_\star\)};
      \node (02) at (0,-2) {\(\Cyl(g)_\star\)};
      \node (22) at (3,-2) {\(\widetilde{\Cone}(g \circ f)_\star\)};
      \draw[right hook->] (00) -- node[above]{} (20);
      \draw[right hook->] (00) -- node[above]{} (02);
      \draw[->>] (02) -- node[above]{} (22);
      \draw[->>] (20) -- node[above]{} (22);
    \end{tikzpicture} \hspace{1em}\raisebox{0.5in}{and}\hspace{1em}
    \begin{tikzpicture}
      \node (00) at (0,0) {\(B_\star\)};
      \node (20) at (3,0) {\(\Cyl(f)_\star\)};
      \node (02) at (0,-2) {\(\Cyl(g)_\star\)};
      \node (22) at (3,-2) {\(\widetilde{\Cyl}(g \circ f)_\star\)};
      \draw[right hook->] (00) -- node[above]{} (20);
      \draw[right hook->] (00) -- node[above]{} (02);
      \draw[->>] (02) -- node[above]{} (22);
      \draw[->>] (20) -- node[above]{} (22);
    \end{tikzpicture}
  \end{center}
  respectively.  One then has the following diagram
  \begin{center}
    \begin{tikzpicture}
      \node (a) at (0,0) {\(A_\star\)};
      \node (cylf) at (0,-2) {\(\Cyl(f)_\star\)};
      \node (conef) at (0,-4) {\(\Cone(f)_\star\)};
      \node (a') at (4,0) {\(A_\star\)};
      \node (fakecylgf) at (4,-2) {\(\widetilde{\Cyl}(g \circ f)_\star\)};
      \node (fakeconegf) at (4,-4) {\(\widetilde{\Cone}(g \circ f)_\star\)};
      \node (coneg) at (8,-4) {\(\Cone(g)_\star\)};
      \node (conezero) at (0,-5) {\(\ZeroObject\)};
      \node (fakeconezero) at (4,-5) {\(\ZeroObject\)};
\draw[right hook->] (a) -- (cylf);
\draw[right hook->] (a') -- (fakecylgf);
\draw[->>] (cylf) -- (conef);
\draw[->>] (fakecylgf) -- (fakeconegf);
\draw[double,double distance=2pt] (a) -- (a');
\path [-] (conef) edge[color=white] node[color=black,opaque,rotate=90] {$ \simeq
  $} (conezero);
\path [-] (fakeconegf) edge[color=white] node[color=black,opaque,rotate=90] {$ \simeq $} (fakeconezero);

\draw[right hook->] (cylf) -- (fakecylgf);
\draw[right hook->] (conef) -- (fakeconegf);
\draw[->>] (fakeconegf) -- (coneg);

    \end{tikzpicture}
  \end{center}
  where the bottom row represents an admissible short-exact sequence
  of complexes.  The fact that $f$ is a weak equivalence implies
  $\Cone(f)_\star \simeq \ZeroObject$ in either the bounded or
  unbounded setting, and similarly the fact that $g \circ f$ is a weak
  equivalence implies $\widetilde{\Cone}(g \circ f)_\star \simeq
  \ZeroObject$ in either the bounded or unbounded setting.  For
  unbounded complexes, this immediately implies that the cokernel of
  the bottom row is contractible.  In the $\mathcal B$-bounded case,
  we appeal to \cref{boundedly-acyclic} above to conclude that $\Cone
  (g)_\star$ is $\mathcal B$-boundedly contractible, implying that $g$
  is a weak equivalence.

  The final case, when $g$ and $g \circ f$ are weak equivalences follows in
  the same manner.
\end{proof}

Before stating the Approximation
Theorem, there are two more definitions that we need.
\begin{definition}
  Let $\cat{C}$ be a category with cofibrations and weak equivalences,
  and $\Ar \cat{C}$ the category of arrows in $\cat{C}$.  A
  \defnword{cylinder functor} on $\cat{C}$ is a functor from $\Ar
  \cat{C}$ to diagrams in $\cat{C}$, sending $f : A \to B$ to a
  diagram
\begin{center}
\begin{tikzpicture}
\node (a) at (0,0) {\(A\)};
\node (tf) at (2,0) {\(\Cyl(f)\)};
\node (b) at (4,0) {\(B\)};
\node (b') at (2,-2) {\(B\)};
\draw[->] (a) -- node[above]{$j_1$} (tf);
\draw[->] (b) -- node[above]{$j_2$} (tf);
\draw[->] (a) -- node[below]{$f$} (b');
\draw[->] (tf) -- node[auto]{$p$} (b');
\draw[->] (b) -- node[below,rotate=45]{$=$} (b');
\end{tikzpicture}
\end{center}
The object $\Cyl(f)$ is the \defnword{cylinder} of $f$ with $j_1$ and
$j_2$ corresponding to the \defnword{front inclusion} and
\defnword{back inclusion}, respectively, and $p$ corresponding the
natural \defnword{projection} to $B$.  The maps $j_1$ and $j_2$ are in
$\co\cat{C}$.  Moreover, the functor must satisfy
\begin{enumerate}[label=(Cyl \arabic*),leftmargin=*]
\item The front and back inclusions assemble to an exact functor $\Ar
  \cat{C} \to \F_1 \cat{C}$ sending $f : A \to B$ to $j_1 \vee j_2 : A
  \vee B \cofibration \Cyl(f)$.  The definition of $F_1 \cat{C}$ can be
  found in \cite{MR802796}.
\item $\Cyl(\ZeroObject \to A) = A$ for every object $A$ in $\cat{C}$; the
  two inclusions and projection map in the corresponding diagram are all
  the identity map on $A$.
\end{enumerate}
\end{definition}

\newtheorem*{cylinder-axiom}{Cylinder Axiom}
\begin{cylinder-axiom}
  For every $f : A \to B$ in $\cat{C}$, the projection $p : \Cyl(f) \to
  B$ is in $\w\cat{C}$.
\end{cylinder-axiom}

\begin{lemma}
  \label{cylinder-lemma}
  The categories $\Bh\Ch_{\Fin}$ and $\h\Ch_{\Fin}$ satisfy cylinder axiom.
\end{lemma}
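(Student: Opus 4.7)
The plan is to construct the standard mapping cylinder of chain complexes, verify axioms (Cyl 1) and (Cyl 2), and then exhibit an explicit chain homotopy showing the projection is a chain homotopy equivalence; the finiteness hypothesis will then reduce the bounded case to the unbounded case.

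For a chain map $f : A_\star \to B_\star$, set
\[
\Cyl(f)_n = A_n \oplus A_{n-1} \oplus B_n,
\]
with differential given by the usual formula $d(a, a', b) = (d_A a - a',\, -d_A a',\, d_B b + f(a'))$, front and back inclusions $j_1(a) = (a,0,0)$ and $j_2(b) = (0,0,b)$, and projection $p(a,a',b) = f(a) + b$. This assignment is manifestly functorial in the arrow category $\Ar \cat{C}$, sends zero objects to zero objects, and agrees with the prescribed data when the source of $f$ is $\ZeroObject$, so axiom (Cyl 2) holds by inspection.

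Next I would verify (Cyl 1): degreewise, $j_1 \vee j_2$ embeds $A_n \oplus B_n$ as the indicated split summand of $A_n \oplus A_{n-1} \oplus B_n$, with complementary summand a copy of $A_{n-1}$ carrying the induced weighting, so it is an admissible monomorphism in the sense of \cref{definition:admissible}. For the cylinder axiom itself, a direct check shows $p \circ j_2 = \id_{B_\star}$, and the degree-one map $h(a, a', b) = (0, a, 0)$ satisfies $dh + hd = j_2 \circ p - \id_{\Cyl(f)_\star}$, exhibiting $p$ as a chain homotopy equivalence with homotopy inverse $j_2$. This settles the unweighted category $\h\Ch_{\Fin}$.

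The only additional point in the weighted case is to verify that $h$, and the structure maps $j_1, j_2, p$ themselves, are $\mathcal B$-bounded. Here the finiteness subscript dispatches what would otherwise be the main obstacle: because $A_\star$ and $B_\star$ are degreewise finitely generated, so is $\Cyl(f)_\star$, and \cref{maps-are-bounded} then guarantees that every $R[G]$-module map in sight is automatically $\mathcal B$-bounded. Thus the argument above goes through verbatim in $\Bh\BCh_{\Fin}$. Without the finiteness hypothesis one would need to argue separately that the cylinder formula preserves boundedness and that the standard homotopy $h$ remains $\mathcal B$-bounded, but under the stated hypotheses this is automatic.
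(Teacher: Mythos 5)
Your proof is correct and follows the same approach as the paper, which also defines $\Cyl(f)$ as the algebraic mapping cylinder and notes that the projection $p$ is bounded. You are more explicit than the paper in writing out the formula, the contracting homotopy $h$, and the verification of (Cyl~1) and (Cyl~2); you also justify boundedness via \cref{maps-are-bounded} (together with its extension to finitely generated projectives), whereas the paper argues more informally that $p$ is a ``projection onto a summand'' of the direct sum. Your appeal to finiteness is the cleaner route, since $p(a,a',b)=f(a)+b$ is not literally a projection onto a direct summand; in any case the two justifications yield the same conclusion under the $\Fin$ hypothesis.
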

\begin{proof}
  Given a chain map $f : C_\star \to D_\star$, define $\Cyl(f) :=
  \Cyl(f)_\star$ to be the algebraic mapping cylinder; in this case,
  the projection $p$ is a projection onto a summand, and therefore is
  bounded by virtue of the way that direct sums of weighted complexes
  are weighted.
\end{proof}

\begin{definition}
  Let $\cat{C}$ and $\cat{D}$ be categories with cofibrations and weak
  equivalences.  A functor $F : \cat{C} \to \cat{D}$ is an
  \defnword{exact functor} provided $F(\ZeroObject) = \ZeroObject$, $F$ sends
  weak equivalences to weak equivalences, cofibrations to
  cofibrations, and $F$ preserves the pushouts appearing in
  \ref{cobase-change}.
\end{definition}
There are many examples of exact functors.  For instance, the ``forget
control'' functor $\Ch_{\BhFin} \to \Ch_{\hFin}$ is exact.

The following is one of the fundamental results of Waldhausen
$K$-theory, and a key ingredient in the proof of
Theorem~\ref{theorem:splitting} below.

\begin{approximationtheorem*}[1.6.7 of \cite{MR802796}]
  Let $\cat{A}$ and $\cat{B}$ be categories with cofibrations and weak
  equivalences.  Suppose $\w\cat{A}$ and $\w\cat{B}$ satisfy the
  Saturation Axiom, that $\cat{A}$ has a cylinder functor, and that $\w\cat{A}$
  satisfies the Cylinder Axiom.  Let $F : \cat{A} \to \cat{B}$ be an
  exact functor with the approximation properties:
\begin{enumerate}[label=(App \arabic*),leftmargin=*]
\item\label{app1} $F$ reflects weak equivalences, meaning that a map
  is a weak equivalence in $\cat{A}$ iff its image is a weak
  equivalence in $\cat{B}$.
\item\label{app2} Given any object $A$ in $\cat{A}$ and a map $x : F(A) \to B$ in
  $\cat{B}$, there exists a cofibration $a : A \cofibration A'$ in
  $\cat{A}$ and a weak equivalence $x' : F(A') \to B$ in $\cat{B}$ for which
\begin{center}
\begin{tikzpicture}
\node (fa) at (0,0) {\(F(A)\)};
\node (fa') at (0,-2) {\(F(A')\)};
\node (b) at (2,-1) {\(B\)};
\draw[right hook->] (fa) -- node[left]{$F(a)$} (fa');
\draw[->] (fa) -- node[above]{$x$} (b);
\draw[->] (fa') -- node[below]{$x'$} (b);
\end{tikzpicture}
\end{center}
\end{enumerate}
commutes.

Then the induced maps $\Realization{\w\cat{A}} \to
\Realization{\w\cat{B}}$ and $\Realization{\wSdot\cat{A}} \to
\Realization{\wSdot\cat{B}}$ of pointed spaces are homotopy
equivalences, which extend to a map of spectra $\KK(\cat{A}) \to
\KK(\cat{B})$.
\end{approximationtheorem*}

\subsection{$K$-theory of the $\mathcal B$-bounded category of complexes}

Define the $K$-theory space of weighted complexes over $R[G]$ to be
$$
\BK(R[G]) =
\Loops\Realization{\BhSdot\BCh_{\hFin}(\BoundedProjModulesWeighted(R[G]))}
$$
with associated spectrum $\BKK(R[G])$.
This is the $\mathcal B$-bounded analogue to the algebraic $K$-theory
space for the ring $R[G]$:
$$
\K(R[G]) = \Loops\Realization{\hSdot
  \Ch_{\hFin}(\ProjModules(R[G]))}.
$$
There is an obvious natural transformation of infinite loop space
functors
$$\BK(-) \to \K(-)$$
induced by forgetting weights and bounds.  Finally, define the
relative $K$-theory $\BKrel(-)$ to be the homotopy fiber of $\BK(-)
\to \K(-)$.

\begin{theorem}
  \label{theorem:splitting}
  There is a functorial splitting of infinite loop spaces
  $$
  \BK(-) \simeq \K(-) \times \BKrel(-).
  $$
\end{theorem}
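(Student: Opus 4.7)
The plan is to construct a section $s: \K(R[G]) \to \BK(R[G])$ of the comparison map $c: \BK(R[G]) \to \K(R[G])$; once such a section is in hand, the splitting follows formally from the definition of $\BKrel$ as the homotopy fiber of $c$. I will build $s$ via a zig-zag of Waldhausen-$K$-theory equivalences passing through finite complexes of finitely generated modules, where \cref{maps-are-bounded} renders all weighting data essentially redundant.

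Introduce the auxiliary Waldhausen categories $\cat{A}_0 := \BCh_{\Fin}(\BoundedProjModulesWeighted(R[G]))$ with $\Bh$-equivalences and $\cat{B}_0 := \Ch_{\Fin}(\ProjModulesFingen(R[G]))$ with $\h$-equivalences, together with the forgetful functor $\Phi: \cat{A}_0 \to \cat{B}_0$ and the inclusions $\Psi: \cat{B}_0 \hookrightarrow \Ch_{\hFin}(\ProjModules(R[G]))$ and $j: \cat{A}_0 \hookrightarrow \BCh_{\hFin}(\BoundedProjModulesWeighted(R[G]))$. The first key step is to verify that $\Phi$ is an equivalence of Waldhausen categories: \cref{maps-are-bounded} ensures that every chain map and every chain homotopy between finitely generated weighted complexes is automatically $\mathcal{B}$-bounded, while \cref{equivalences-between-categories} matches up the underlying categories; the admissible monomorphisms (split with $\mathcal{L}$-bounded sections in the finitely generated case) and the two notions of weak equivalence then coincide under $\Phi$, so $K(\Phi)$ is an equivalence.

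The second key step is to apply the Approximation Theorem to $\Psi$. The Saturation and Cylinder axioms are supplied by \cref{saturation-lemma} and \cref{cylinder-lemma}; condition (App~1) holds because a chain map between bounded complexes of projectives is a chain homotopy equivalence iff it is a quasi-isomorphism, a property detected in the target; and (App~2) is the standard construction of finite projective approximations to homotopically finite complexes. This gives an equivalence $K(\Psi): K(\cat{B}_0) \xrightarrow{\simeq} \K(R[G])$. Define
\[
s := K(j) \circ K(\Phi)^{-1} \circ K(\Psi)^{-1}.
\]
To check that $c \circ s \simeq \mathrm{id}$, I observe that the forgetful functor $\BCh_{\hFin}(\BoundedProjModulesWeighted(R[G])) \to \Ch_{\hFin}(\ProjModules(R[G]))$ restricts on $\cat{A}_0$ to exactly $\Psi \circ \Phi$; hence at the level of $K$-theory $c \circ K(j) = K(\Psi) \circ K(\Phi)$, and so $c \circ s = K(\Psi) \circ K(\Phi) \circ K(\Phi)^{-1} \circ K(\Psi)^{-1} \simeq \mathrm{id}_{\K(R[G])}$.

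The splitting $\BK(-) \simeq \K(-) \times \BKrel(-)$ then follows, and naturality in $R[G]$ of every auxiliary category and functor gives functoriality of the splitting; because all constructions live at the level of the $\Sdot$-machine, the splitting extends to spectra. The main obstacle I anticipate is the first key step: one must carefully confirm that the full Waldhausen structures on $\cat{A}_0$ and $\cat{B}_0$ really do agree, not merely the underlying categories, since \cref{maps-are-bounded} is a statement about matrix representations of module maps while the relevant cofiber weightings and chain homotopies are finer chain-level data; in particular, one must check that a bounded chain homotopy inverse and an unbounded chain homotopy inverse exist simultaneously or not at all between finite complexes.
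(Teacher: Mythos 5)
Your proposal is correct and follows essentially the same route as the paper: both exploit \cref{maps-are-bounded} to identify the finite $\mathcal B$-bounded and finite unweighted Waldhausen categories (so that bounded and unbounded chain homotopies coincide there), apply the Approximation Theorem to the inclusion of finite into homotopically finite complexes, and manufacture the section of the comparison map from this zig-zag. The only difference is cosmetic: the paper also runs the Approximation Theorem once more on the $\mathcal B$-bounded side, producing an equivalence $\BCh_{\Fin}(\cat C)\xrightarrow{\simeq}\BCh_{\BhFin}(\cat C)$ and routing the section through $\BCh_{\BhFin}$, whereas you feed $\cat A_0=\BCh_{\Fin}(\BoundedProjModulesWeighted(R[G]))$ directly into $\BCh_{\hFin}$ via $j$ and get the same section with one fewer application of Approximation.
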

In other words, the $K$-theory of the category $\Ch_{\hFin}$ with
respect to the weak equivalence relation of $\mathcal{B}$-bounded
chain homotopy equivalence splits canonically as the product of the
$K$-theory of $\Ch_{\Fin}$ and the relative theory.
\begin{proof}
Compare the following to Proposition~2.1.1 of \cite{MR802796}.  To
conserve space, let $\cat{C} = \BoundedProjModulesWeighted(R[G])$.  We
begin by considering the diagram\footnote{This is not unlike the
  situation in equivariant homotopy theory, where one has various
  notions of weak equivalence.}:
\begin{center}
\begin{tikzpicture}
\node (02) at (0,-4)   {\(\Realization{\hSdot\Ch_{\Fin}(\cat{C})}\)};
\node (12) at (5,-4)   {\(\Realization{\hSdot\Ch_{\hFin}(\cat{C})}\)};

\node (11) at (5,-2)   {\(\Realization{\BhSdot\BCh_{\hFin}(\cat{C})}\)};

\node (00) at (0,0) {\(\Realization{\BhSdot\BCh_{\Fin}(\cat{C})}\)};
\node (10) at (5,0)  {\(\Realization{\BhSdot\BCh_{\BhFin}(\cat{C})}\)};

\draw[->] (00) -- node[auto]{$\simeq$} (10); 
\draw[->] (02) -- node[auto]{$\simeq$} (12);
\draw[->] (00) -- node[auto]{$\cong$} (02); 

\draw[->] (10) -- (11);
\draw[->] (11) -- (12);

\end{tikzpicture}
\end{center}

The left hand vertical map $\Realization{\BhSdot\BCh_{\Fin}(\cat{C})} \to
\Realization{\hSdot\Ch_{\Fin}(\cat{C})}$ is a weak homotopy equivalence; in fact,
more is true: the category $\Ch_{\Fin}(\cat{C})$ is the same as the category
$\BCh_{\Fin}(\cat{C})$ by \cref{maps-are-bounded}.  Furthermore, the two
choices of subcategories of weak equivalences, $\Bh\Ch_{\Fin}(\cat{C})$ and
$\h\Ch_{\Fin}(\cat{C})$, are identical, so $\Realization{\BhSdot\BCh_{\Fin}(\cat{C})} \to
\Realization{\hSdot\Ch_{\Fin}(\cat{C})}$ is a homeomorphism, induced by an isomorphism of
simplicial categories.

The top arrow $\Realization{\BhSdot\BCh_{\Fin}(\cat{C})} \to
\Realization{\BhSdot\BCh_{\BhFin}(\cat{C})}$ is a weak homotopy
equivalence by the Approximation Theorem; we verify properties
\ref{app1} and \ref{app2}.  Property \ref{app1} is clear:
$\BCh_{\Fin}(\cat{C})$ is a full subcategory of
$\BCh_{\BhFin}(\cat{C})$; moreover, if two objects in
$\BCh_{\Fin}(\cat{C})$ are $\mathcal B$-boundedly chain homotopy
equivalent in $\BCh_{\BhFin}(\cat{C})$, then they were so in
$\BCh_{\Fin}(\cat{C})$.  Similarly, the map $\Ch_{\Fin}(\cat{C}) \to
\Ch_{\hFin}(\cat{C})$ satisfies \ref{app1}.

The second property \ref{app2} is only slightly more involved.
Suppose $C_\star$ is a finite weighted complex, $D_\star$ a
$\mathcal B$-boundedly homotopically finite weighted complex, and $f :
C_\star \to D_\star$ a $\mathcal B$-bounded chain map.  Verifying
\ref{app2} requires factoring $f$ as
$$
C_\star \xhookrightarrow{g} E_\star \xrightarrow{h} D_\star
$$
with $g$ a cofibration, and $h$ a weak equivalence in $\Bh\BCh_{\BhFin}(\cat{C})$.

The chain complex $D_\star$ is $\mathcal B$-boundedly homotopically
finite; let $j : D_\star \to D'_\star$ be a $\mathcal B$-bounded
chain homotopy equivalence, with $D'_\star$ finite.  Define
$\tilde{f} = j \circ f$, and set $E_\star = \Cyl(\tilde{f})$.  Then
$E_\star$ is a finite complex, and the inclusion $g : C_\star
\hookrightarrow E_\star$ is a cofibration\footnote{The mapping
  cylinder construction together with the inclusion is the
  prototypical example of an admissible monomorphism in the category
  of $\mathcal B$-bounded chain complexes.} in $\BCh_{\Fin}(\cat{C})$.
The weak equivalence $h$ is the composition of the projection
$E_\star \to D'_\star$ (which is a weak equivalence) with $j^{-1}
: D'_\star \to D_\star$, which is $\mathcal B$-bounded because its
domain is finite (by \cref{maps-are-bounded}).

The same argument, albeit without considerations of $\mathcal
B$-boundedness, shows that $$\hSdot\Ch_{\Fin}(\cat{C}) \to
\BhSdot\Ch_{\hFin}(\cat{C})$$ which induces the bottom arrow after
realization, satisfies \ref{app2}.

To apply the Approximation Theorem, we also need the categories
involved to satisfy the Saturation Axiom: this we verified in
\cref{saturation-lemma}.  Finally, the hypotheses of the Approximation
Theorem require that $\BhSdot\BCh_{\Fin}(\cat{C})$ and
$\hSdot\Ch_{\Fin}(\cat{C})$ satisfy the Cylinder Axiom: this we
verified in \cref{cylinder-lemma}.

We therefore conclude by the Approximation Theorem that the top and
bottom horizontal maps are in fact weak equivalences, which in turn
implies that
$$
\Realization{\BhSdot\BCh_{\BhFin}(\cat{C})} \longrightarrow
\Realization{\BhSdot\BCh_{\hFin}(\cat{C})} \longrightarrow 
\Realization{\hSdot\Ch_{\hFin}}
$$
is a weak homotopy equivalence.  Hence
$\Realization{\hSdot\Ch_{\hFin}(\cat{C})}$ splits off
$\Realization{\BhSdot\BCh_{\hFin}(\cat{C})}$ up to homotopy.  These
maps are induced by maps of Waldhausen categories, and hence induce
infinite loop space maps upon passage to $K$-theory.
\end{proof}
On the level of spectra,
$$
  \BKK(-) \simeq \KK(-) \vee \BKKrel(-).
$$

\subsection{The relative Wall obstruction to $\mathcal B$-finiteness}

Inspired by Ranicki's setup for an algebraic finiteness obstruction
\cite{MR815431}, we now consider the relationship between whether
$C_\star(EG)$ vanishes in $\BKrel_0(R[G])$ and whether $C_\star(EG)$
has the $\mathcal B$-bounded homotopy type of a finite complex.

\begin{observation}
  Let $\mathcal B$ be a bounding class, and let $G$ be a group of type
  $\BFP^\infty$.  Then $C_\star(EG)$ is $\mathcal B$-boundedly
  homotopy equivalent to a finite complex $D_\star$, and so
  $[C_\star(EG)] = 0$ in $\BKrel_0(\Z[G])$.
\end{observation}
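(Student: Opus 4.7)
The plan is to use the $\BFP^\infty$ hypothesis to extract a $\mathcal B$-bounded finite projective resolution $D_\star \to \Z$ of the trivial $\Z[G]$-module, and then compare this $D_\star$ with the bar resolution $C_\star(EG)$ via the fundamental lemma of homological algebra, tracking boundedness throughout. By the $\BFP^\infty$ hypothesis, $D_\star$ may be taken to be a chain complex of finitely generated projective $\Z[G]$-modules (with their canonical word-length weighting) whose differentials are $\mathcal B$-bounded, and whose total $\bigoplus_n D_n$ is finitely generated over $\Z[G]$; thus $D_\star \in \Ch_{\Fin}(\BoundedProjModulesWeighted(\Z[G]))$.

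Next, I construct comparison maps $\phi : C_\star(EG) \to D_\star$ and $\psi : D_\star \to C_\star(EG)$ covering the identity on $\Z$, together with chain homotopies $\psi \phi \simeq \id$ and $\phi \psi \simeq \id$, by the usual inductive lifting argument using projectivity of the terms and exactness of both resolutions. The map $\psi$ and the homotopy $\phi \psi \simeq \id_{D_\star}$ have domain $D_\star$, which is finitely generated in each degree, so by \cref{maps-are-bounded} they are automatically $\mathcal B$-bounded.

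The main obstacle is then ensuring that $\phi$ itself and the homotopy $\psi \phi \simeq \id_{C_\star(EG)}$, both of which have the infinitely generated domain $C_\star(EG)$, can be chosen $\mathcal B$-boundedly. The strategy is to define $\phi_n$ on the bar-basis elements $[g_1 | \cdots | g_n]$ and extend $\Z[G]$-linearly; since the bar differential is $\mathcal L$-bounded and the differentials of $D_\star$ are $\mathcal B$-bounded by hypothesis, a dimension-shifting induction (choosing lifts through the finitely many $\Z[G]$-generators of each $D_n$) yields $\phi$ with $\mathcal B$-controlled growth on basis elements, and the same machine builds the chain homotopy $\mathcal B$-boundedly. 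With the $\mathcal B$-bounded chain homotopy equivalence $C_\star(EG) \simeq D_\star$ in hand, $C_\star(EG)$ lies in $\BCh_{\BhFin}(\BoundedProjModulesWeighted(\Z[G]))$ and represents the same class as $D_\star$ in $\BK_0$. Under the splitting of \cref{theorem:splitting}, the image of $\Ch_{\Fin}$ sits inside the $\K$-factor, so $[D_\star]$, and hence $[C_\star(EG)]$, has trivial $\BKrel$-component, yielding $[C_\star(EG)] = 0$ in $\BKrel_0(\Z[G])$.
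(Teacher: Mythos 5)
The paper presents this as an \emph{Observation} and offers no proof: the first clause (``$C_\star(EG)$ is $\mathcal B$-boundedly homotopy equivalent to a finite complex'') is taken as an essentially definitional consequence of the $\BFP^\infty$ hypothesis, imported from \cite{2010arXiv1004.4677J}, and the second clause then follows by the splitting of \cref{theorem:splitting} exactly as you say at the end. Your route is genuinely different: you try to \emph{reconstruct} the bounded chain equivalence from scratch by running the comparison theorem of homological algebra between the bar resolution and a putative bounded finite resolution $D_\star$.

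The problem is that this reconstruction hides its entire content in one sentence. You claim that ``since the bar differential is $\mathcal L$-bounded and the differentials of $D_\star$ are $\mathcal B$-bounded, a dimension-shifting induction \ldots yields $\phi$ with $\mathcal B$-controlled growth,'' and the same for the homotopy $\psi\phi \simeq \id$. This does not follow. At the inductive step you must lift $\phi_{n-1} d^C_n([g_1|\cdots|g_n]) \in \ker d^D_{n-1}$ through $d^D_n : D_n \twoheadrightarrow \ker d^D_{n-1}$, and there is no reason the smallest (or any) preimage has weighted norm controlled by an element of $\mathcal B$ in terms of the weight of the input. Boundedness of the two differentials says nothing about the growth of solutions to $d^D_n(x) = y$. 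What would make this work is a $\mathcal B$-bounded contracting chain homotopy on $D_\star$ (then one sets $x = s(y)$); but the existence of such an $s$ is precisely the substantive content of the $\BFP^\infty$ condition, not a byproduct of bounded differentials and finite generation. As written, your argument smuggles in the conclusion at the step where the hypothesis must actually be used, which is the very step that distinguishes $\mathcal B$-bounded homological algebra from the classical theory. (You also quietly upgrade $\BFP^\infty$ to a \emph{finite} total resolution $\bigoplus_n D_n$; in the classical setting $\FP^\infty$ does not give this, so either the paper's $\BFP^\infty$ is stronger than the name suggests or you need to justify the reduction.) The cleanest fix is to invoke the definition/characterization of $\BFP^\infty$ from \cite{2010arXiv1004.4677J} directly --- which is what the paper does by presenting this as an Observation --- and reserve your effort for the second clause, where your appeal to \cref{maps-are-bounded} and to \cref{theorem:splitting} is correct and matches the paper's intent.
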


By Theorem~3 in \cite{2010arXiv1004.4677J}, a group is of type
$\BFP^\infty$ if and only if $G$ is $\mathcal B$-$\SIC$.
The contrapositive of the observation with this result proves
\begin{theorem}
  Let $\mathcal B$ be a bounding class, and $G$ a group of type
  $\FP^\infty$.  If $[C_\star(EG)] \neq 0$ in $\BKrel_0(\Z[G])$, then
  $G$ is not $\mathcal B$-$\SIC$.
\end{theorem}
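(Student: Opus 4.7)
The plan is to prove this by contraposition, exactly along the lines already sketched in the sentence preceding the theorem statement. Suppose that $G$ is of type $\FP^\infty$ and that $G$ \emph{is} $\mathcal B$-$\SIC$; the goal becomes to show that $[C_\star(EG)] = 0$ in $\BKrel_0(\Z[G])$, contradicting the hypothesis.

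The first step is to upgrade the combined hypothesis ``$\FP^\infty$ together with $\mathcal B$-$\SIC$'' to ``$\BFP^\infty$.'' This is the content of Theorem~3 of \cite{2010arXiv1004.4677J}, which is quoted in the paragraph just before the theorem: an $\FP^\infty$ group is $\mathcal B$-$\SIC$ if and only if it is of type $\BFP^\infty$. With that upgrade in hand, the second step is to invoke the preceding \textbf{Observation}, which says that for a group of type $\BFP^\infty$ the homogeneous bar resolution $C_\star(EG)$ is $\mathcal B$-boundedly chain homotopy equivalent to a finite weighted complex $D_\star$. Thus $[C_\star(EG)] = [D_\star]$ as classes in $\BK_0(\Z[G])$, where $D_\star$ already represents an object of $\BCh_{\Fin}(\BoundedProjModulesWeighted(\Z[G]))$.

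The third and final step is to translate this equality into vanishing in the \emph{relative} group. Using the splitting of Theorem~\ref{theorem:splitting}, $\BK_0(\Z[G]) \cong \K_0(\Z[G]) \oplus \BKrel_0(\Z[G])$, and the class of any complex coming from $\BCh_{\Fin}$ lies in the $\K$-summand (since the composition $\BCh_{\Fin} \hookrightarrow \BCh_{\hFin} \to \Ch_{\hFin}$ factors through the finite category and is precisely what gives the splitting in the proof of \cref{theorem:splitting}). Hence $[D_\star]$ has zero component in $\BKrel_0(\Z[G])$, and therefore so does $[C_\star(EG)]$. Taking the contrapositive of the combined implication yields the theorem.

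The proof is almost entirely a repackaging of results already available: the external input is Theorem~3 of \cite{2010arXiv1004.4677J}, and the internal input is the splitting of \cref{theorem:splitting}. The only point that I would take care to spell out is the last step, namely that the splitting really sends the class of a $\mathcal B$-bounded homotopically finite complex to zero in $\BKrel_0$; this is an immediate consequence of how the splitting was constructed via the Approximation Theorem in \cref{theorem:splitting}, but is the place where one must be careful not to conflate the observation (which only gives equality in $\BK_0$) with the desired conclusion (vanishing in $\BKrel_0$).
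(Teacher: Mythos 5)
Your argument is correct and is exactly the one the paper gives: pass to the contrapositive, use Theorem~3 of \cite{2010arXiv1004.4677J} to upgrade ``$\FP^\infty$ and $\mathcal B$-$\SIC$'' to $\BFP^\infty$, and then apply the Observation that $\BFP^\infty$ groups satisfy $[C_\star(EG)] = 0$ in $\BKrel_0(\Z[G])$. Your final paragraph---unpacking, via the splitting of Theorem~\ref{theorem:splitting}, why a $\mathcal B$-boundedly homotopically finite complex has vanishing $\BKrel_0$-component---spells out a step that the paper's Observation simply asserts, and your caution about not conflating equality in $\BK_0$ with vanishing in $\BKrel_0$ is well placed.
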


A concrete example from \cite{2010arXiv1004.4677J} may also be
relevant here.  Specifically, there exists a solvable group $G$ (given
as a split extension $\Z^2 \to G \to \Z$) with $BG$ homotopy
equivalent to a closed oriented 3-manifold ${M_G}$.  Therefore, the
manifold $M_G$ is a finite model for $BG$, and via the spectral
sequence constructed in \cite{2010arXiv1004.4677J}, the group $G$ is
not $\mathcal B$-$\SIC$ for any bounding class $\mathcal{B} \prec
\mathcal{E}$.
\begin{conjecture}
  $[C_\star(EG)]$ represents an element of infinite order in
  $\PKrel_0(\Z[G])$.
\end{conjecture}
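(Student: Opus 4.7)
The plan is to construct a secondary trace or ``relative Chern character''
\[
\tau : \PKrel_0(\Z[\pi]) \longrightarrow \C
\]
whose target is torsion-free, and for which $\tau([C_\star(E\pi)])$ is manifestly nonzero; infinite order then follows automatically, since $\tau$ is a group homomorphism.

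First I would exploit the class $t^2 \in H^2(\pi;\C)$ from \cite{2010arXiv1004.4677J}, which does \emph{not} lift through the comparison map $\PH^2(\pi;\C) \to H^2(\pi;\C)$, to define the pairing $\tau = \tau_{t^2}$. On a representative $(C_\star, \eta)$---where $\eta$ records the vanishing of $[C_\star]$ in $K_0(\Z[\pi])$---I would take a Hattori--Stallings style trace of a chain-level cycle witnessing $\eta$, and pair it against $t^2$. The reason this descends to the relative group, rather than being defined only on $\PK_0$, is precisely that $t^2$ has no $\mathcal{P}$-bounded cocycle representative: classes coming from the image of $\BK_0(\Z[\pi]) \to \PK_0(\Z[\pi])$ pair trivially with $t^2$ because their associated cochains land in the image of $\PH^\star \to H^\star$, and $t^2$ annihilates this image by hypothesis.

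Next I would compute $\tau_{t^2}$ on the canonical class $[C_\star(E\pi)]$. Because $B\pi \simeq M_G$ is a closed oriented $3$-manifold, we have $\chi(\pi) = 0$, so $[C_\star(E\pi)] = 0$ in $K_0(\Z[\pi])$ and genuinely lifts to $\PKrel_0(\Z[\pi])$. Using Poincar\'e duality on $M_G$ together with a class $\omega \in H^1(\pi;\C)$ dual to $t^2$ via the fundamental class, the value $\tau_{t^2}([C_\star(E\pi)])$ should compute as $\langle t^2 \cup \omega, [M_G] \rangle$, which I would verify is nonzero by an explicit calculation on the $E_2$-page of the Hochschild--Serre spectral sequence of the split extension $\Z^2 \to G \to \Z$ used in \cite{2010arXiv1004.4677J}---this is essentially the same computation that produced $t^2$ in the first place.

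The hard part will be showing that $\tau_{t^2}$ is well-defined, additive, and invariant under $\mathcal{P}$-bounded chain homotopy equivalences of finite complexes, while simultaneously \emph{not} being invariant under unbounded chain homotopies (which is precisely what allows it to detect the relative group). This is a secondary/transgression phenomenon, analogous in spirit to an eta-invariant or a regulator: the failure of $t^2$ to admit a polynomially bounded cochain representative is the exact data needed to make the pairing descend from $\PK_0$ to the homotopy fiber $\PKrel_0$. Once this naturality is established, infinite order is immediate---the target $\C$ is a $\Q$-vector space, and a nonzero homomorphism into it forces the domain class to have infinite order.
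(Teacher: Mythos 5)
The statement you are trying to prove is labeled a \emph{conjecture} in the paper, and the paper itself offers no proof whatsoever. There is therefore nothing to compare your proposal against: the authors explicitly leave this open. So the right question is whether your proposal, taken on its own, constitutes a proof, and it does not. What you have written is a strategy outline, and you yourself flag the core difficulty in your penultimate paragraph: \textit{``The hard part will be showing that $\tau_{t^2}$ is well-defined, additive, and invariant under $\mathcal{P}$-bounded chain homotopy equivalences \dots while simultaneously not being invariant under unbounded chain homotopies.''} That ``hard part'' is not a verification to be done at the end; it is the entire content of the conjecture, and none of it is carried out.

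Beyond the acknowledged gap, several steps in the sketch are not actually well-posed as stated. First, the pairing $\tau_{t^2}$ is never defined: ``a Hattori--Stallings style trace of a chain-level cycle witnessing $\eta$'' does not pin down a homomorphism out of $\pi_0$ of a homotopy fiber of Waldhausen $K$-theory spaces, and one needs an argument that the construction is independent of all the choices (of finite model, of nullhomotopy data $\eta$, of cochain representative for $t^2$). Second, the descent argument is phrased backwards and imprecisely. You write that classes in the image of $\BK_0(\Z[\pi]) \to \PK_0(\Z[\pi])$ pair trivially with $t^2$ — there is no such map in the paper's setup (the comparison map runs $\PK_0 \to K_0$); presumably you mean the splitting $K_0 \to \PK_0$ furnished by \cref{theorem:splitting}, but then the claim that $t^2$ ``annihilates'' cochains ``in the image of $\PH^\star \to H^\star$'' is a conflation: $t^2$ failing to be in that image is a statement about $t^2$, not a vanishing of any pairing, and you give no mechanism by which that failure forces $\tau_{t^2}$ to kill the $K_0$ summand. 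Third, the proposed nonvanishing $\tau_{t^2}([C_\star(E\pi)]) = \langle t^2 \cup \omega, [M_G]\rangle$ depends on an undefined $\omega \in H^1(\pi;\C)$ and on an unestablished identification of the trace-pairing with a cup-product pairing; even granting all that, the intersection form of a solvmanifold does not automatically make this number nonzero, so an actual computation is required and is not given. Until the invariant $\tau_{t^2}$ is constructed and its three required properties (well-definedness on $\PKrel_0$, vanishing on the complementary $K_0$ summand, nonvanishing on $[C_\star(E\pi)]$) are proved, this remains a plausible-sounding research direction rather than a proof, which is consistent with the paper's decision to state it as a conjecture.
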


\section{An Assembly Map}

We construct an assembly map
$$
BG_{+} \Smash \BKK(\Z) \to \BKK(\Z[G]).
$$
by recognizing $\Loops^\infty \Sigma^\infty BG_{+}$ as the $K$-theory of
a Waldhausen category $\Monomial(G)$, and applying Section~1.5 of
\cite{MR802796} to promote a pairing of Waldhausen categories into a
product on the level of the associated spectra.

\subsection{Monomial category}

Recall that a monomial matrix is a square matrix which, when
conjugated by a permutation matrix, is diagonal.  Define $W_n(G)$ to
be the group of $n \times n$ monomial matrices with entries in $\pm
G$; or to be more precise, let $\Sigma_n$ denote the symmetric group
on $n$ letters.  These permutations act on $n \times n$ matrices, and
by interpreting $\pm G^n$ as the $n \times n$ diagonal matrices, the
group $\Sigma_n$ acts on $\pm G^n$ giving rise to the semidirect
product $W_n(G) = \Sigma_n \semidirect (\pm G^n)$.

The category $\Monomial(G)$ will package together these monomial
matrices $W_n(G)$ alongside projections and inclusions.  An object of
$\Monomial(G)$ is the $\Z[G]$-module $\Z[G][X]$ for some finite set $X$.
A morphism in $\Monomial(G)$ is an arbitrary composition of
\begin{itemize}
\item inclusions, $\Z[G][X] \to \Z[X \sqcup Y]$, induced from $X
  \hookrightarrow X \sqcup Y$
\item projections, $\Z[G][X \sqcup Y] \to \Z[G][X]$, sending $y \in Y$
  to zero, and
\item monomial maps, $\Z[G][X] \to \Z[G][X]$ given by an element of
  $W_n(G)$ when $n = |X|$.
\end{itemize}
Define a subcategory of cofibrations $\co\Monomial(G)$ by considering
maps given by arbitrary compositions of inclusions and monomial maps;
any such composition can be simplified to
$$
Z[G][X_1] \hookrightarrow Z[G][X_1 \sqcup X_2] \cong Z[G][X_1 \sqcup X_2]
$$
where the left hand map is an inclusion induced from $X_1
\hookrightarrow X_1 \sqcup X_2$ and the right hand isomorphism is a
monomial matrix in $W_n(G)$.  Define a subcategory of weak
equivalences $\w\Monomial(G)$ by considering only the monomial maps.
Then we have
\begin{lemma}
  The category $\Monomial(G)$ with the described subcategories of
  cofibrations and weak equivalences is a Waldhausen category.
\end{lemma}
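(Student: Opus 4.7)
I want to verify three things: the existence of a zero object, the three cofibration axioms (Cof1)-(Cof3), and the two weak equivalence axioms (Weq1)-(Weq2). The zero object is $\Z[G][\emptyset] = 0$, which is both initial (via the inclusion induced from $\emptyset \hookrightarrow X$) and terminal (via the projection sending everything in $X$ to zero). The plan is to handle the ``easy'' axioms quickly and then focus on the pushout-type axioms (Cof3) and (Weq2), which are the only places where structure beyond basic bookkeeping is needed.

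For (Cof1), every isomorphism $\Z[G][X] \to \Z[G][Y]$ in $\Monomial(G)$ must have $|X| = |Y|$ and, by the explicit description of morphisms as partial monomial matrices, reduces to a monomial matrix, hence lies in $\co\Monomial(G)$. For (Cof2), the unique map $0 \to \Z[G][X]$ is the inclusion induced by $\emptyset \hookrightarrow X$, which is visibly a cofibration. For (Weq1), every isomorphism is a monomial map by the same analysis as (Cof1), and thus is in $\w\Monomial(G)$.

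The main work is (Cof3). Since every cofibration is (up to post-composition with a monomial automorphism, which can be absorbed on both sides of the square) a canonical inclusion $i : \Z[G][X_1] \hookrightarrow \Z[G][X_1 \sqcup X_2]$, I reduce to the case of such an inclusion. Given an arbitrary morphism $f : \Z[G][X_1] \to \Z[G][Y]$ in $\Monomial(G)$, I claim the pushout is $\Z[G][Y \sqcup X_2]$, with the canonical inclusion $j : \Z[G][Y] \hookrightarrow \Z[G][Y \sqcup X_2]$ on one side and the map equal to $f$ on $\Z[G][X_1]$ and the identity inclusion on $\Z[G][X_2]$ on the other. The universal property is immediate from the universal property of pushouts in the category of $\Z[G]$-modules together with the splitting $\Z[G][X_1 \sqcup X_2] \cong \Z[G][X_1]\oplus \Z[G][X_2]$. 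The only nontrivial point is that the map $\Z[G][X_1 \sqcup X_2] \to \Z[G][Y \sqcup X_2]$ lies in $\Monomial(G)$: since $f$ is a partial monomial matrix from $X_1$ to $Y$ and the identity on $X_2$ is a monomial matrix, the resulting block matrix is again a partial monomial matrix (at most one non-zero entry per row and per column), hence a morphism in $\Monomial(G)$. The map $j$ is by construction an inclusion, hence a cofibration. This is the step I expect to be the main obstacle, because it requires giving an explicit combinatorial description of the pushout and verifying the monomial structure persists.

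Finally, for (Weq2), I use the observation that every morphism in $\w\Monomial(G)$ is a monomial isomorphism. Given the gluing diagram, all three vertical maps are isomorphisms, so the induced map on pushouts $B \cup_A C \to B' \cup_{A'} C'$ is an isomorphism by the universal property. To check it is a monomial isomorphism, I use the decomposition above: writing $B = \Z[G][X_A \sqcup Y]$ and $B' = \Z[G][X_{A'} \sqcup Y']$, compatibility of the square forces the monomial iso $B \to B'$ to send $X_A$ bijectively to $X_{A'}$, and hence $Y$ bijectively to $Y'$, so it restricts to a monomial iso $\Z[G][Y] \to \Z[G][Y']$. The induced map on pushouts is then a direct sum of monomial isomorphisms on $\Z[G][Y] \oplus C \to \Z[G][Y'] \oplus C'$, which is itself a monomial isomorphism and thus a weak equivalence.
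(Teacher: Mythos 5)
Your proof is correct in outline and in fact more detailed than the paper's, which treats essentially all of the axioms as routine; the one place the two arguments genuinely diverge is the treatment of (Cof3). The paper exhibits one explicit pushout square, namely the case where the free leg $f$ is a projection $\Z[G][X_1\sqcup Y]\to \Z[G][X_1]$, and then asserts that the cases where $f$ is a canonical inclusion or a monomial map are "analogous," leaving the general $f$ to be handled implicitly by iterating cobase change along generators. You instead normalize the cofibration $i$ to the canonical inclusion $\Z[G][X_1]\hookrightarrow\Z[G][X_1\sqcup X_2]$ (absorbing the monomial factor) and compute the pushout for \emph{arbitrary} $f$ in one step, identifying it with $\Z[G][Y\sqcup X_2]$. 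This is cleaner, but it relies on a characterization — that the morphisms of $\Monomial(G)$ are precisely the partial monomial matrices, i.e.\ $\pm G$-valued matrices with at most one nonzero entry in each row and column — which you invoke but do not prove. That characterization is true (factor any such matrix as projection, then monomial isomorphism, then inclusion) and is exactly what is needed to know the cocone map $\Z[G][X_1\sqcup X_2]\to\Z[G][Y\sqcup X_2]$ lies in $\Monomial(G)$, so it deserves a sentence. Your (Weq2) argument, via the bijection $Y\leftrightarrow Y'$ forced by commutativity of the square, is likewise an explicit version of what the paper calls "a few elementary cases."

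One subtlety worth flagging, which your proposal and the paper's proof share: in verifying (Cof3) you appeal to "the universal property of pushouts in the category of $\Z[G]$-modules." But (Cof3) requires a pushout \emph{in $\Monomial(G)$}, and the unique $\Z[G]$-linear cocone map out of $\Z[G][Y\sqcup X_2]$ need not be a partial monomial matrix — for instance with $X_1=\emptyset$, $X_2=\{x\}$, $Y=\{y\}$, and a test object $\Z[G][\{t\}]$ receiving both $x\mapsto t$ and $y\mapsto t$, the induced map has two nonzero entries in the same row. So the module-theoretic pushout does not obviously satisfy the universal property against $\Monomial(G)$-morphisms when $f$ is an inclusion or a zero map. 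In the one case the paper writes out (where $f$ is a projection) the induced map is a column-restriction of $u$ and hence does remain monomial, so the paper's explicit case is safe; it is precisely the cases declared "analogous" that are delicate. This does not invalidate what you wrote — your computation of the pushout object and the cofibration status of $j$ are right, and the pushouts actually consumed by the $\Sdot$-construction (those along $A\to\ast$) do satisfy the universal property in $\Monomial(G)$ — but "immediate from the universal property of $\Z[G]$-module pushouts" overstates the situation, and a careful writeup should either restrict attention to the pushouts the construction needs or address why the induced cocone maps stay monomial.
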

\begin{proof}
  \ref{Cof1} and \ref{Cof2} are clear; considering the diagram
\begin{center}
\begin{tikzpicture}
\node (x) at (0,0) {\(\Z[G][X_1 \sqcup Y ]\)};
\node (y) at (0,-2) {\(\Z[G][X_1 \sqcup Y \sqcup X_2]\)};
\node (z) at (4,0) {\(\Z[G][X_1]\)};
\node (w) at (4,-2) {\(\Z[G][X_1 \sqcup X_2]\)};
\draw[right hook->] (x) -- node[auto]{$i$} (y);
\draw[->] (x) -- node[auto]{$f$} (z);
\draw[->] (y) -- (w);
\draw[right hook->] (z) -- node[auto]{$j$} (w);
\end{tikzpicture}
\end{center}  
verifies the co-base change axiom \ref{Cof3} when the top arrow is a
projection; an analogous argument verifies that \ref{Cof3} holds when
the top arrow is an inclusion or a monomial map.

It is immediate that every isomorphism is a weak equivalence, so
\ref{Weq1} holds.  That weak equivalences can be glued follows
directly by considering a few elementary cases; thus \ref{Weq2} holds.
\end{proof}
Because $\Monomial(G)$ is a Waldhausen category, we can apply the
$\Sdot$ construction to produce
$$
\K(\Monomial(G)) = \Loops\Realization{\wSdot \Monomial(G)}.
$$
But the identification of Waldhausen's $\Sdot$ construction with
Quillen's $Q$-construction and the Barratt--Priddy--Quillen--Segal
theorem yields
$$
\K(\Monomial(G)) \simeq \Loops B\left(\bigsqcup_{n \geq 0} BW_n(G)\right) \simeq \Z \times BW_\infty(G)^{+} \simeq \Loops^\infty \Sigma^\infty BG_{+}.
$$

\subsection{Pairing}

In Section~1.5 of \cite{MR802796}, Waldhausen describes how to build
external pairings of categories with cofibrations and weak equivalences.
\begin{proposition}
  \label{biexact-functor}
  Suppose the functor $F : \cat{A} \times \cat{B} \to \cat{C}$  is \defnword{bi-exact},
  meaning that $F(A, -)$ and $F(-,B)$ are exact functors for fixed
  objects $A$ of $\cat{A}$ and $B$ of $\cat{B}$, respectively.
  Additionally, suppose that for every pair of cofibrations $A
  \hookrightarrow A'$ in $\co\cat{A}$ and $B \hookrightarrow B'$ in
  $\co\cat{B}$, the induced map
  $$
  F(A',B) \cup_{F(A,B)} F(A,B') \to F(B,B')
  $$
  is a cofibration in $\cat{C}$.  Such a functor $F$ induces a map of
  bisimplicial bicategories
  $$
  \wSdot \cat{A} \times \wSdot \cat{B} \to \wwSSdot \cat{C}
  $$
  and further produces a map of spaces $K(\cat{A}) \Smash K(\cat{B})
  \to K(\cat{C})$ which extends to a map of spectra $\KK(\cat{A}) \Smash \KK(\cat{B})
  \to \KK(\cat{C})$
\end{proposition}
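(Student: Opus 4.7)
The plan is to adapt Waldhausen's pairing construction from Section~1.5 of \cite{MR802796}. The idea is to use $F$ to produce, for each pair of indices $n$ and $m$, a functor
\[
F_{n,m} : \Sn \cat{A} \times \mathrm{S}_m \cat{B} \to \mathrm{S}_n \mathrm{S}_m \cat{C},
\]
to verify this is a compatible family of functors between Waldhausen bisimplicial categories, and then to pass to realizations and iterate.

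First, I would define $F_{n,m}$ on objects. An object of $\Sn \cat{A}$ is a functor $A : \Ar[n] \to \cat{A}$ and similarly for $B : \Ar[m] \to \cat{B}$; applying $F$ in both variables yields a candidate functor $\Ar[n] \times \Ar[m] \to \cat{C}$ given by $(\alpha,\beta) \mapsto F(A(\alpha), B(\beta))$, which should be interpreted as an object of $\mathrm{S}_n \mathrm{S}_m \cat{C}$.

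Second, I would verify that this candidate really does lie in $\mathrm{S}_n \mathrm{S}_m \cat{C}$. Monotonicity along each individual factor is immediate from exactness of $F(-,B(\beta))$ and $F(A(\alpha),-)$, which carry cofibrations in $\cat{A}$ (resp.\ $\cat{B}$) to cofibrations in $\cat{C}$. The remaining content is that the maps induced by two composable arrows \emph{in each factor simultaneously} are cofibrations in $\cat{C}$. This is precisely the function of the pushout hypothesis: the condition that $F(A',B)\cup_{F(A,B)} F(A,B') \to F(A',B')$ is a cofibration is exactly what is needed, applied to the cofibrations that $A$ and $B$ produce along $\Ar[n]$ and $\Ar[m]$, to make the required diagonal map into a cofibration of cofibrations. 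Subquotients and cofibers then match on the nose by exactness of $F$ in each variable, and the zero conditions $F_{n,m}(A,B)(j\to j)=\ZeroObject$ follow from $F(0,-)=F(-,0)=\ZeroObject$, which is part of bi-exactness.

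Third, bi-exactness implies $F_{n,m}$ carries weak equivalences in either factor to weak equivalences, which lifts the functor to $\wSn\cat{A}\times \mathrm{w}\mathrm{S}_m\cat{B} \to \mathrm{w}\mathrm{w}\mathrm{S}_n\mathrm{S}_m\cat{C}$. Varying $n$ and $m$ and checking compatibility with the simplicial face/degeneracy operators produces the asserted map of bisimplicial bicategories. Upon taking geometric realization, the resulting map of spaces $|\wSdot\cat{A}|\times|\wSdot\cat{B}| \to |\wwSSdot\cat{C}|$ descends through the smash product because $F(\ZeroObject,-)$ and $F(-,\ZeroObject)$ are constant at $\ZeroObject$, so the wedge $|\wSdot\cat{A}|\vee |\wSdot\cat{B}|$ lands at the basepoint. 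Combined with the standard identification of $|\wwSSdot\cat{C}|$ as a delooping of $|\wSdot\cat{C}|$ (whence of $K(\cat{C})$), looping once yields the desired space-level pairing $\K(\cat{A}) \Smash \K(\cat{B}) \to \K(\cat{C})$. Iterating the $\sdot$-construction and the pairing on $\cat{C}$ assembles compatible pairings on each delooping and hence a map of spectra $\KK(\cat{A}) \Smash \KK(\cat{B}) \to \KK(\cat{C})$.

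The main obstacle is the second step: the diagram chase verifying that the bifiltered object $F_{n,m}(A,B)$ is a legitimate object of $\mathrm{S}_n\mathrm{S}_m\cat{C}$, where the pushout hypothesis is fully consumed. Everything after that is formal: bi-exactness handles the weak equivalences, the basepoint behavior of $F$ on zero objects handles the smash product factorization, and iteration of $\sdot$ handles the extension to spectra by standard Waldhausen machinery.
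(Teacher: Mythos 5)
Your proposal correctly reconstructs Waldhausen's pairing construction from Section 1.5 of \cite{MR802796}, which is exactly the result the paper is invoking; the paper itself gives no proof and simply cites Waldhausen, so your sketch and the intended argument coincide. One small imprecision: to recover the space-level pairing from the map $\Realization{\wSdot\cat{A}}\Smash\Realization{\wSdot\cat{B}}\to\Realization{\wwSSdot\cat{C}}$, one uses that $\Realization{\wSdot\cat{A}}$ and $\Realization{\wSdot\cat{B}}$ are deloopings of $K(\cat{A})$ and $K(\cat{B})$ while $\Realization{\wwSSdot\cat{C}}$ is a double delooping of $K(\cat{C})$, so the adjunction yielding $K(\cat{A})\Smash K(\cat{B})\to K(\cat{C})$ is slightly more than a single loop; this is standard and does not affect the substance.
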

We now apply \cref{biexact-functor} to produce a pairing
$$
\KK(\Monomial(G)) \Smash \BKK(\Z) \to \BKK(\Z[G])
$$
by exhibiting a suitable biexact functor 
$$
F : \Monomial(G) \times \BCh_{\hFin}(\BoundedProjModulesWeighted(\Z))
\to \BCh_{\hFin}(\BoundedProjModulesWeighted(\Z[G])).
$$
Given $M \in \Monomial(G)$ and $C_\star \in
\BCh_{\hFin}(\BoundedProjModulesWeighted(\Z))$, define $F(M,C_\star)$
to be the chain complex $D_\star$ with
$$
D_n = M \otimes_{\Z} C_n.
$$
For a fixed $M \in \Monomial(G)$ or $C_\star \in
\BCh_{\hFin}(\BoundedProjModulesWeighted(\Z))$, the partial functors $F(M,-)$
and $F(-,C_\star)$ are exact, in other words, 
\begin{itemize}
\item $F(-,\ZeroObject) = F(\ZeroObject,-) = \ZeroObject$, 
\item $F(M,-)$ and $F(-,C_\star)$ send weak equivalences to weak equivalences,
\item $F(M,-)$ and $F(-,C_\star)$ send cofibrations to cofibrations, and 
\item $F(M,-)$ and $F(-,C_\star)$ preserves the pushouts appearing in
  \ref{cobase-change}.
\end{itemize}
There is also a technical condition to verify: given cofibrations $M
\hookrightarrow M'$ in $\co\Monomial(G)$ and $C_\star \hookrightarrow
C'_\star$ in $\co \BCh_{\hFin}(\BoundedProjModulesWeighted(\Z))$, is
the map
$$
F(M',C_\star) \cup_{F(M,C_\star)} F(M,C'_\star) \to F(M',C'_\star)
$$
a cofibration?  In fact it is.  
The cofibrations give rise to splittings $C'_p \cong C_p \oplus C''_p$ and
$M' \cong M \oplus M''$, so the degree $p$ component of $F(M',C_\star) \cup_{F(M,C_\star)}
F(M,C'_\star)$ is
$$
(M' \otimes C_\star)_p \oplus_{(M \otimes C_\star)_p} (M \otimes C'_\star)_p.
$$
or equivalently
$$
\left( \left( M \oplus M'' \right) \otimes C_p \right)
\oplus_{M \otimes C_p} \left(M \otimes \left(C_p \oplus C''_p\right) \right)
$$
which expands to
$$
\left(M \otimes C_p\right) \oplus 
\left(M'' \otimes C_p\right) \oplus 
\left(M \otimes C''_p\right)
$$
and so the map into 
\begin{align*}
F(M',C'_\star)_p &= (M' \otimes C'_\star)_p   \\
&= \left(M \oplus M''\right) \otimes \left(C_p \oplus C''_p\right)
\end{align*}
is a cofibration, as required by the hypotheses of
\cref{biexact-functor}.  Therefore, we have proved
\begin{proposition}
  The functor $F$ induces a pairing on the level of spectra
$$
\KK(\Monomial(G)) \Smash \BKK(\Z) \to \BKK(\Z[G]).
$$
which we denote by $\Asm(G)$.
\end{proposition}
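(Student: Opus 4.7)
The plan is to apply \cref{biexact-functor} directly to the functor $F$ defined above. Much of the required verification has already been carried out in the discussion preceding the statement; what remains is to organize the hypotheses correctly, handle a couple of residual weighting issues, and invoke the machinery.

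First I would confirm that $F$ takes values in $\BCh_{\hFin}(\BoundedProjModulesWeighted(\Z[G]))$. For $M = \Z[G][X]$ with $X$ finite and a weighted projective $\Z$-module of the form $(\Z[Y],p)$, there is a canonical identification $M \otimes_\Z \Z[Y] \cong \Z[G][X \times Y]$ with the additive weighting $w(g,x,y) = L(g) + w_X(x) + w_Y(y)$, and the idempotent $\id_M \otimes p$ is $\mathcal L$-bounded because $p$ is. Since $X$ is finite, the $\mathcal B$-bounded differentials of $C_\star$ induce $\mathcal B$-bounded differentials on $M \otimes_\Z C_\star$, and any $\mathcal B$-bounded chain homotopy equivalence $C_\star \simeq D_\star$ with $D_\star$ finite over $\Z$ produces a $\mathcal B$-bounded chain homotopy equivalence $M \otimes_\Z C_\star \simeq M \otimes_\Z D_\star$ with the target again finite. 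Thus $F$ lands in the correct category.

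Next, the partial exactness of $F(M,-)$ and $F(-,C_\star)$ follows from standard properties of the tensor product: preservation of the zero object, of cofibrations, and of pushouts along cofibrations. The only point that requires attention in the weighted setting is that weak equivalences are preserved; this is automatic because the finite set $X$ of generators of $M$ contributes only a multiplicative constant to any bound on a chain homotopy, which is essentially the content of \cref{maps-are-bounded}. The remaining cofibration condition for the induced pushout map $F(M',C_\star) \cup_{F(M,C_\star)} F(M,C'_\star) \to F(M',C'_\star)$ was verified explicitly in the paragraph immediately preceding the statement, by choosing splittings $M' \cong M \oplus M''$ and $C'_p \cong C_p \oplus C''_p$ and recognizing the map as the inclusion of a summand.

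With all hypotheses of \cref{biexact-functor} in hand, the cited proposition yields a map of bisimplicial bicategories $\wSdot \Monomial(G) \times \wSdot \BCh_{\hFin}(\BoundedProjModulesWeighted(\Z)) \to \wwSSdot \BCh_{\hFin}(\BoundedProjModulesWeighted(\Z[G]))$, and hence, on passing to geometric realization and loop space, the asserted pairing of spectra $\KK(\Monomial(G)) \Smash \BKK(\Z) \to \BKK(\Z[G])$, which we label $\Asm(G)$. The only real subtlety is the compatibility of weights and $\mathcal B$-bounds under the tensor product, but this reduces to finite-set manipulations since $\Monomial(G)$ consists of free $\Z[G]$-modules on finite weighted sets.
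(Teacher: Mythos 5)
Your proposal follows essentially the same route as the paper: verify that $F$ is biexact and satisfies the pushout--cofibration condition, then invoke \cref{biexact-functor} to produce the pairing of spectra. The extra care you take with the identification $\Z[G][X] \otimes_\Z \Z[Y] \cong \Z[G][X \times Y]$ and the resulting additive weighting is a reasonable elaboration of a point the paper leaves implicit, but it does not change the underlying argument.
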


\subsection{Whitehead spectrum}

The assembly map
$$
\Asm(G) : \Sigma^\infty BG_{+} \Smash \BKK(\Z) \to \BKK(\Z[G])
$$
permits us to define a $\mathcal B$-bounded Whitehead spectrum,
$$
\BWh(G) = \cofiber \Asm(G). 
$$
Consider the following diagram.
\begin{center}
\begin{tikzpicture}
\node (m1) at (0,0) {\(\Sigma^\infty BG_{+} \Smash \BKKrel(\Z)\)};
\node (m2) at (4,0) {\(\BKKrel(\Z[G])\)};
\node (m3) at (8,0) {\(\BWhrel(G)\)};

\node (n1) at (0,-1.5) {\(\Sigma^\infty BG_{+} \Smash \BKK(\Z)\)};
\node (n2) at (4,-1.5) {\(\BKK(\Z[G])\)};
\node (n3) at (8,-1.5) {\(\BWh(G)\)};

\node (p1) at (0,-3) {\(\Sigma^\infty BG_{+} \Smash \KK(\Z)\)};
\node (p2) at (4,-3) {\(\KK(\Z[G])\)};
\node (p3) at (8,-3) {\(\Wh(G)\)};
\draw[->] (m1) -- (m2);
\draw[->] (m2) -- (m3);
\draw[->] (n1) -- (n2);
\draw[->] (n2) -- (n3);
\draw[->] (p1) -- (p2);
\draw[->] (p2) -- (p3);

\draw[->] (m1) -- (n1);
\draw[->] (n1) -- (p1);

\draw[->] (m2) -- (n2);
\draw[->] (n2) -- (p2);

\draw[->] (m3) -- (n3);
\draw[->] (n3) -- (p3);

\end{tikzpicture}
\end{center}
By the functoriality of the splitting $\BKK(-) \simeq \KK(-) \vee
\BKKrel(-)$, the fiber of the vertical arrow $\BWh(G) \to \Wh(G)$ can
be identified with the cofiber $\BWhrel(G)$ of 
$$
\Asm(G) : \Sigma^\infty BG_{+} \Smash \BKKrel(\Z) \to \BKKrel(\Z[G])
$$
and further
\begin{theorem}
  There is a functorial splitting
  $$
  \BWh(-) \simeq \Wh(-) \times \BWhrel(-).
  $$
\end{theorem}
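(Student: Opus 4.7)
The plan is to transfer the functorial splitting of Theorem~\ref{theorem:splitting} across the cofiber sequence defining $\BWh(G)$. Concretely, I would show that the assembly map $\Asm(G)$ respects the splitting $\BKK(-) \simeq \KK(-) \vee \BKKrel(-)$ and then apply $\cofiber$ summand by summand.

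First, I would observe that the biexact functor $F$ defining $\Asm(G)$ is natural with respect to the ``forget weights/bounds'' functor, yielding a commutative square in which the bounded assembly map $\Asm(G)$ maps to the classical assembly map $\Asm^{\mathrm{cls}}(G)$ under the vertical comparison maps $\BKK \to \KK$; this is already the bottom two-thirds of the $3 \times 3$ diagram in the excerpt. Taking fibers of the vertical comparison maps then produces the top row, namely the relative assembly $\Asm^{\mathrm{rel}}(G) : \Sigma^\infty BG_{+} \Smash \BKKrel(\Z) \to \BKKrel(\Z[G])$, whose cofiber is $\BWhrel(G)$ by definition.

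Next, by the naturality clause of Theorem~\ref{theorem:splitting}, each column of this diagram splits: $\BKK(\Z[G]) \simeq \KK(\Z[G]) \vee \BKKrel(\Z[G])$, and similarly for $\BKK(\Z)$---smashing with $\Sigma^\infty BG_{+}$ preserves wedge decompositions, so the left column splits as well. Under these identifications, $\Asm(G)$ decomposes into a $2 \times 2$ matrix of maps of spectra, and the off-diagonal entries vanish up to homotopy: the cross-term $\Sigma^\infty BG_{+} \Smash \KK(\Z) \to \BKK(\Z[G]) \to \BKKrel(\Z[G])$ is the composition of $\Asm(G)$ with the composite $\KK \to \BKK \to \BKKrel$, which is null in any split cofiber sequence; the other cross-term factors through $\BKKrel(\Z) \to \BKK(\Z) \to \KK(\Z)$, which is null by the defining fiber sequence. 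Hence $\Asm(G) \simeq \Asm^{\mathrm{cls}}(G) \vee \Asm^{\mathrm{rel}}(G)$, and since cofibers commute with wedges in spectra,
$$\BWh(G) = \cofiber\,\Asm(G) \simeq \cofiber\,\Asm^{\mathrm{cls}}(G) \vee \cofiber\,\Asm^{\mathrm{rel}}(G) = \Wh(G) \vee \BWhrel(G),$$
with functoriality in $G$ inherited from the functoriality of each piece.

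The main obstacle is the diagonalization step---verifying that the assembly map respects the splitting so that the off-diagonal components are null-homotopic. This is precisely where the functoriality in Theorem~\ref{theorem:splitting} is essential: it ensures that the section and projection morphisms realizing the splitting commute (naturally in the coefficient ring) with the comparison map $\BKK \to \KK$, and hence with the assembly map built from the biexact functor $F$, forcing the cross terms to factor through the zero composites noted above.
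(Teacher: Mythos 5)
Your proposal is correct and takes essentially the same approach as the paper: both push the functorial splitting of $\BKK \simeq \KK \vee \BKKrel$ through the cofiber sequences defining the Whitehead spectra via the $3 \times 3$ diagram. You spell out the diagonalization of $\Asm(G)$ and the vanishing of cross-terms more explicitly than the paper does (the paper simply invokes functoriality of the splitting), which is useful---though note that the assertion that the section commutes with $\Asm(G)$ ultimately rests on the biexact functor $F$ being compatible with the inclusion $\Ch_{\Fin} \hookrightarrow \Ch_{\hFin}$ and with forgetting weights, a point neither you nor the paper makes fully explicit.
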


\subsection{Concluding Remarks}

The assembly map focuses attention on $\BK_\star(\Z)$, which we
conjecture to be highly nontrivial, even in degree zero.  To
illustrate some of the complexities involved, consider the map of
weighted sets
$$
f: (\N,\id) \to (\N,\log)
$$
where $(\N,\id)$ is the weighted set in which $n$ has weight $n$,
$(\N,\log)$ is the weighted set in which $n$ has weight $\log n$, and
$f(n) = n$.  The map $f$ is polynomially bounded, but the inverse
is not.  This map $f$ gives rise to a map of weighted $\Z$-modules
$$
\Z[\N] \to \Z[\N]
$$
which is unboundedly an isomorphism, but not invertible as a
polynomially bounded map.  For the polynomial bounding class
$\mathcal{P}$, the group $\PK_0(\Z)$ includes classes arising from
finitely generated free $\Z$-modules but also a class for the chain
complex
$$
0 \to \Z[\N] \to \Z[\N] \to 0
$$
We conjecture that the class of this chain complex is a nonzero
element of infinite order in $\PK_0(\Z)$.

\newpage

\bibliographystyle{alpha}
\bibliography{references}

\end{document}